\newcommand{\limto}{{\displaystyle\lim_{\longrightarrow}}}
\newcommand{\rightlim}{\mathop{\limto}}
\newcommand{\leftlim}{\mathop{\displaystyle\lim_{\longleftarrow}}}
\newcommand{\limfromn}{\leftlim\limits_{\raise3pt\hbox{$n$}}}
\newcommand{\limton}{\rightlim\limits_{\raise3pt\hbox{$n$}}}
\newcommand{\rightlimit}[1]{\mathop{\lim\limits_{\longrightarrow}}\limits%
                   _{\raise3pt\hbox{$\scriptstyle #1$}}}
\newcommand{\leftlimit}[1]{\mathop{\lim\limits_{\longleftarrow}}\limits%
                   _{\raise3pt\hbox{$\scriptstyle #1$}}}
\numberwithin{equation}{section}
\newcommand{\rar}[1]{\stackrel{#1}{\longrightarrow}}
\newcommand{\xrar}[1]{\xrightarrow{#1}}
\newcommand{\into}{\hookrightarrow}
\newcommand{\al}{\alpha}
\newcommand{\be}{\beta}
\newcommand{\Ga}{\Gamma}
\newcommand{\la}{\lambda}
\newcommand{\ze}{\zeta}
\newcommand{\vp}{\varphi}
\newcommand{\bC}{{\mathbb C}}
\newcommand{\bF}{{\mathbb F}}
\newcommand{\bG}{{\mathbb G}}
\newcommand{\bH}{{\mathbb H}}
\newcommand{\bN}{{\mathbb N}}
\newcommand{\bQ}{{\mathbb Q}}
\newcommand{\bR}{{\mathbb R}}
\newcommand{\bZ}{{\mathbb Z}}
\newcommand{\cH}{{\mathcal H}}
\newcommand{\cR}{{\mathcal R}}
\newcommand{\fg}{{\mathfrak g}}
\newcommand{\fh}{{\mathfrak h}}
\newcommand{\fk}{{\mathfrak k}}
\newcommand{\Hom}{\operatorname{Hom}}
\newcommand{\id}{\operatorname{id}}
\newcommand{\Span}{\operatorname{span}}
\newcommand{\Ind}{\operatorname{Ind}}
\newcommand{\Rep}{\operatorname{Rep}}
\newcommand{\tens}{\otimes}
\newcommand{\st}{\,\big\vert\,}
\newcommand{\sbr}{\smallbreak}
\newcommand{\mbr}{\medbreak}
\newtheorem{thm}{Theorem}[section]
\newtheorem{cor}[thm]{Corollary}
\newtheorem{lem}[thm]{Lemma}
\newtheorem{prop}[thm]{Proposition}
\theoremstyle{remark}
\newtheorem{rem}[thm]{Remark}
\newtheorem{defin}[thm]{Definition}
\newcommand{\cst}{\bC^\times}
\newcommand{\cInd}{c\!-\!\operatorname{Ind}}
\newcommand{\uInd}{u\!-\!\operatorname{Ind}}
\begin{document}

\title[Unipotent groups over local fields]{Representations of unipotent groups over local fields and Gutkin's conjecture}

\author{Mitya Boyarchenko}
\email{mityab@umich.edu}
\address{Mathematics Department \\
University of Michigan \\
530 Church Street \\
Ann Arbor, MI 48109--1043, USA}


\begin{abstract}
Let $F$ be a finite field or a local field of \emph{any} characteristic. If $A$ is a finite dimensional associative nilpotent algebra over $F$, the set $1+A$ of all formal expressions of the form $1+x$, where $x\in A$,
is a locally compact group with the topology induced by the standard one on $F$ and the multiplication $(1+x)\cdot(1+y)=1+(x+y+xy)$. We prove a result conjectured by E.~Gutkin in 1973: every
unitary irreducible representation of $1+A$ can be obtained by
unitary induction from a $1$-dimensional unitary character of a subgroup of the form $1+B$, where $B\subset A$ is an $F$-subalgebra.
In the case where $F$ is local and nonarchimedean we also establish an analogous result for \emph{smooth} irreducible
representations of $1+A$ over $\bC$ and
show that every such representation is admissible and carries an
invariant Hermitian inner product.
\end{abstract}

\maketitle

\section{Introduction}\label{s:introduction}

Let $F$ be a self-dual field, that is, a finite field, or $\bR$, or
$\bC$, or a finite extension of $\bQ_p$, or a field $\bF_q((t))$ of
formal Laurent series in one variable over a finite field. We equip
$F$ with the natural topology, making it a locally compact
topological field (for a finite field, the topology is discrete).
The term ``self-dual'' is explained by the following observation.
Fix a nontrivial unitary character $\psi:(F,+)\rar{}\cst$ of the
additive group of $F$. For each $a\in F$, define
$\psi_a(x)=\psi(ax)$ for all $x\in F$. Then the map $a\mapsto\psi_a$
is a topological isomorphism between $(F,+)$ and its Pontryagin
dual.

\mbr

Let $A$ be an associative $F$-algebra, which is not assumed to be
unital. For each $m\geq 1$, write $A^m$ for the subspace of $A$
spanned by all elements of the form $a_1 a_2\dotsm a_m$, where
$a_j\in A$. We say that $A$ is \emph{nilpotent} if $A^m=0$ for some
$m\geq 1$. In this case the set $1+A$ of all formal expressions of
the form $1+x$, where $x\in A$, is a group under the operation
$(1+x)(1+y)=1+(x+y+xy)$. (For instance, if $A^2=0$, then $1+A$ is identified with the additive group of $A$.)

\mbr

Now suppose that $A$ is a finite dimensional associative nilpotent
algebra over $F$. Then $A$ inherits a natural topology from $F$, and
$1+A$ becomes a locally compact (Hausdorff) and second countable topological group.
Moreover, it is unimodular. If $B$ is an $F$-subalgebra of $A$, then
$1+B$ can be viewed as a closed subgroup of $1+A$.

\mbr

In this situation, E.~Gutkin formulated in \cite{gutkin} the
following statement:

\begin{thm}\label{t:gutkin-unitary}
Let $\pi:1+A\rar{}U(\cH)$ be a unitary irreducible representation.
Then there exist an $F$-subalgebra $B\subset A$ and a unitary
character $\al:1+B\rar{}\cst$ such that
$\pi\cong\uInd_{1+B}^{1+A}\al$.
\end{thm}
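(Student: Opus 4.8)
The plan is to argue by induction on $\dim_F A$, transposing Kirillov's orbit-method proof for nilpotent Lie groups into the category of groups $1+B$, so that no characteristic-zero tool ($\exp$, $\log$, the Lie algebra) is needed. If $A^2=0$, then $1+A\cong(A,+)$, every unitary irreducible representation is a unitary character, and $B=A$ works. Otherwise, the two-sided annihilator $\fa_0=\{a\in A:aA=Aa=0\}$ is a nonzero ideal (it contains $A^{m-1}$ when $A^m=0\neq A^{m-1}$), so $1+\fa_0$ is a central subgroup of $1+A$, topologically isomorphic to $(\fa_0,+)$. By Schur's lemma $\pi|_{1+\fa_0}$ is a unitary character, which by self-duality of $F$ is $1+z\mapsto\psi(f(z))$ for a unique $F$-linear $f\colon\fa_0\to F$. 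If $\Ker f\neq0$, then $\pi$ is trivial on the central subgroup $1+\Ker f$ and is inflated from $1+(A/\Ker f)$; if $f=0$, it is inflated from $1+(A/\fa_0)$. In either case one applies the inductive hypothesis to a nilpotent algebra of smaller dimension and pulls the resulting subalgebra and character back, using that unitary induction commutes with inflation. So we may assume $\fa_0=Fe$ is one-dimensional and, after rescaling, that $\pi(1+te)=\psi(t)\cdot\Id$ for all $t\in F$.

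Next, set $\fa_1=\{a\in A:aA+Aa\subseteq Fe\}$; by nilpotency $Fe\subsetneq\fa_1$, and $\fa_1\cdot\fa_1\subseteq Fe$. Suppose first that $\fa_1$ is not contained in the centre $Z(A)$ of the algebra $A$, and pick $w\in\fa_1$ with $aw\neq wa$ for some $a\in A$. Then $I=Fe+Fw$ is a commutative two-sided ideal with $I\cdot I\subseteq Fe$, so $1+I$ is a \emph{non-central} abelian normal subgroup of $1+A$. A short computation shows that conjugation by $1+a$ carries $1+v$ to $1+v+(av-va)$ for $v\in\fa_1$, with $av-va\in Fe$; hence the stabiliser in $1+A$ of any unitary character $\chi$ of $1+I$ restricting to $\psi$ on $1+Fe$ is $1+C_A(w)$, where $C_A(w)$ is the centraliser of $w$ in the associative algebra $A$ — an $F$-subalgebra, proper because $w\notin Z(A)$. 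Mackey's machine (legitimate here because $1+A$ acts on the Pontryagin dual $\widehat{1+I}$ with algebraic, hence locally closed, orbits) shows $\pi|_{1+I}$ is supported on a single $(1+A)$-orbit, necessarily nontrivial (otherwise $\chi$ would be $(1+A)$-invariant and $C_A(w)=A$), so $\pi\cong\uInd_{1+C_A(w)}^{1+A}\pi_1$ for an irreducible unitary representation $\pi_1$ of $1+C_A(w)$. As $\dim C_A(w)<\dim A$, the inductive hypothesis gives $\pi_1\cong\uInd_{1+B}^{1+C_A(w)}\al$ with $B\subseteq C_A(w)$ a subalgebra, and transitivity of unitary induction finishes this case.

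The remaining case, $\fa_1\subseteq Z(A)$ — so that $1+\fa_1$ is a central subgroup of $1+A$ strictly larger than $1+Fe$, with $\fa_1$ commutative — is where the associative structure must really be used and where I expect the main difficulty to lie: the normal subgroups cut out by vanishing of brackets no longer force a dimension drop, and one is pushed into a genuine Heisenberg/oscillator argument. One would pass to $\fa_2=\{a\in A:aA+Aa\subseteq\fa_1\}$, whose commutator pairing on $\fa_2/\fa_1$ takes values in the central $1+\fa_1$, invoke the Stone--von Neumann theorem over the local field $F$ (valid in any characteristic once $\psi$ is nontrivial) to control $\pi|_{1+\fa_2}$, extend the resulting Heisenberg representation to $1+A$ through an explicit oscillator/Weil model — checking that the Mackey multiplier is trivial, which should reduce to $1+A$ acting on $1+\fa_2$ by \emph{unipotent} automorphisms fixing $\psi$ — and then present $\pi$ as $\uInd_{1+\fh}^{1+A}$ of a unitary character, where a polarising subalgebra $\fh$ is built from a composition series of $A$; here one uses that any subspace of $\fa_i$ containing the layer $\fa_{i-1}$ is automatically a subalgebra, since $\fa_i\cdot\fa_i\subseteq\fa_{i-1}$, which is exactly what keeps all inducing subgroups of the form $1+B$.

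Finally, some foundational care is needed in the non-archimedean cases, including positive characteristic, where Lie theory is unavailable: one must know that $1+A$ is type~I (indeed liminal), that the Stone--von Neumann theorem and Mackey's machine apply, and that unitary induction through the closed subgroups $1+B$ behaves well (transitivity, behaviour under inflation, Frobenius reciprocity). The most economical uniform route — and presumably the one actually taken for non-archimedean $F$ — is to run the same induction first for \emph{smooth} irreducible representations over $\bC$, using only algebraic Clifford theory and the smooth analogue of Stone--von Neumann, which work in every characteristic, then prove such representations are admissible and carry invariant Hermitian inner products, and deduce the unitary statement by completion. For $F=\bR$ or $\bC$ the theorem is Kirillov's for simply connected nilpotent Lie groups, and for $F$ finite it is standard finite-group Clifford theory, so the genuinely new case is precisely that of a non-archimedean local field.
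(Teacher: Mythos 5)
Your proposal takes a genuinely different route from the paper, and that route has an unresolved gap precisely where you yourself flag ``the main difficulty.''

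Your Case 1 (a noncentral $w\in\fa_1$) is correct in spirit and close to the paper's mechanism: one constructs an abelian normal subgroup $1+I$, lets Mackey's machine descend $\pi$ to an induced representation from the stabilizer $1+C_A(w)$ of a character, and observes that the stabilizer is of the form $1+B$ with $B$ a proper subalgebra. What makes this case easy is that the commutator $(1+a,1+w)$ lands in $1+\fa_0$, which annihilates everything, so the pairing $(a,w)\mapsto[a,w]$ is \emph{literally} $F$-bilinear and no deeper input is needed. That is exactly why this case does not expose the real difficulty.

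Your Case 2 ($\fa_1\subseteq Z(A)$) is a real gap. You correctly note that the kernel of $\pi\bigl\lvert_{1+\fa_1}$ is not an ideal of $A$ (it meets $Fe$ trivially while $\fa_1 A\neq 0$), so one cannot quotient; and the commutator pairing on $\fa_1$ is identically $1$, so the Case~1 mechanism gives nothing. Passing to $\fa_2$ reintroduces higher-order terms in the group commutator, so the pairing $(a,w)\mapsto(1+a,1+w)$ is no longer visibly $F$-bilinear, and one cannot simply read off a codimension-one subalgebra $A_1$. Your proposed fix --- Stone--von~Neumann over $F$ plus an oscillator/Weil model, with a claim that the Mackey multiplier is trivial --- is not carried out, and it is by no means automatic: in positive characteristic one cannot exponentiate, the splitting of the metaplectic cover over a unipotent subgroup needs a proof, and one must verify that the resulting inducing subgroup really is of the form $1+B$ with $B$ an associative subalgebra (a polarizing Lie subalgebra does not a priori do this). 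These are exactly the obstacles that Gutkin's conjecture is about.

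The paper avoids Case 2 entirely by a different decomposition. Instead of the ascending annihilator series $\fa_0\subset\fa_1\subset\cdots$, it takes $m\geq 2$ minimal with $\pi(1+A^m)$ scalar and studies the commutator pairing
\[
C_\ze:(1+A)\times(1+A^{m-1})\longrightarrow\bC^\times,\qquad C_\ze(g,h)=\ze(ghg^{-1}h^{-1}),
\]
which is automatically nontrivial by minimality of $m$. The key algebraic input --- Corollary~\ref{c:commutator-pairing}, which rests on Halasi's result (Proposition~\ref{p:halasi} and Theorem~\ref{t:commutators}) on commutators in $1+A$ --- is that this pairing factors through $A/A^2\times A^{m-1}/A^m$ and is \emph{$F$-bilinear}, something that is far from obvious because the group commutator $(1+x)(1+y)(1+x)^{-1}(1+y)^{-1}$ is not $1+[x,y]$ in this range. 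This bilinearity yields an $F$-linear map $\Phi_\ze:A/A^2\to(A^{m-1}/A^m)^*$, a $1$-dimensional $L$, a codimension-one two-sided ideal $A_1$, and a two-sided ideal $U\subset A^{m-1}$ with $1+A_1$ equal to the stabilizer of a character of $1+U$ extending $\ze$. Then the paper runs Mackey's machine (a system of imprimitivity on $(1+U)^*$; Folland, Theorem~6.31) to get $\pi\cong\uInd_{1+A_1}^{1+A}\rho$, and inducts on $\dim_F A$. No Stone--von~Neumann, no Weil representation, no multiplier computation, and the argument is uniform over \emph{all} self-dual fields including $\bR$, $\bC$, and finite fields. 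So the missing ingredient in your proposal is precisely Halasi's commutator theorem and the resulting $F$-bilinearity of the commutator pairing modulo $(1+A,1+A^m)$.

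One smaller point: you guess that for nonarchimedean $F$ the unitary theorem is deduced from the smooth one ``by completion.'' In fact the paper proves the unitary statement directly, reusing the same key construction but replacing Rodier's smooth theorems with the classical unitary Mackey machine; the smooth and unitary theorems are proved in parallel, not one from the other.
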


In this statement, $U(\cH)$ denotes the group of unitary operators
on a complex Hilbert space $\cH$, and $\uInd$ denotes the operation
of unitary induction (see \S\ref{sss:unitary-induction}).

\mbr

The original motivation behind Theorem \ref{t:gutkin-unitary} was
the following application:

\begin{cor}\label{c:dimensions}
If $F=\bF_q$ is a finite field with $q$ elements and $A$ is a finite
dimensional associative nilpotent algebra over $\bF_q$, the
dimension of every complex irreducible representation of the finite
group $1+A$ is a power of $q$.
\end{cor}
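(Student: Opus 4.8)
The plan is to read off Corollary~\ref{c:dimensions} directly from Theorem~\ref{t:gutkin-unitary}. When $F=\bF_q$ the group $1+A$ is finite---indeed a finite $p$-group for $p=\chr F$, since its underlying set is $A$ and $\abs{A}=q^{\dim_F A}$---so every finite-dimensional complex representation of $1+A$ is unitarizable: one averages an arbitrary Hermitian inner product over the group. Hence an arbitrary complex irreducible representation $\pi$ of $1+A$ may be regarded as a unitary irreducible representation on a finite-dimensional Hilbert space $\cH$, and Theorem~\ref{t:gutkin-unitary} applies to it.

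I would then invoke the theorem to produce an $F$-subalgebra $B\subset A$ and a unitary character $\al\colon 1+B\rar{}\cst$ with $\pi\cong\uInd_{1+B}^{1+A}\al$. The next step is the routine observation that for a finite group unitary induction from a subgroup agrees, as a representation, with ordinary algebraic induction; since $\al$ is one-dimensional, the representation space of $\uInd_{1+B}^{1+A}\al$ has dimension equal to the index $[1+A:1+B]$. Finally, because $1+A$ and $1+B$ have underlying sets $A$ and $B$ respectively, this index equals $\abs{A}/\abs{B}=q^{\dim_F A-\dim_F B}$, which is a power of $q$; thus $\dim_\bC\pi=q^{\dim_F A-\dim_F B}$, as claimed.

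In short, I expect no real obstacle here: all of the work is contained in Theorem~\ref{t:gutkin-unitary}. I would only remark that the weaker assertion that $\dim_\bC\pi$ is a power of $p$ is automatic from the fact that $1+A$ is a finite $p$-group, whereas the sharper statement with $q$ in place of $p$ genuinely requires the $F$-algebra structure exploited by the theorem.
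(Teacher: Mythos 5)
Your proof is correct and follows the paper's own derivation: Corollary~\ref{c:dimensions} is read off from Theorem~\ref{t:gutkin-unitary} by noting that $[1+A:1+B]=q^{\dim_F A-\dim_F B}$ is a power of $q$. The only additions---unitarizing $\pi$ by averaging and identifying unitary with algebraic induction for finite groups---are routine facts the paper leaves implicit.
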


This result follows from Theorem \ref{t:gutkin-unitary}, because, in
the situation of Corollary \ref{c:dimensions}, if $B\subset A$ is a
subalgebra, then the index of $1+B$ in $1+A$ equals
$q^{\dim(A)-\dim(B)}$, which is a power of $q$.

\mbr

As an example, fix an integer $n\geq 2$ and let $A$ be the algebra
of strictly upper-triangular matrices of size $n$ over $\bF_q$
(under the usual operations of matrix addition and matrix
multiplication). Then $1+A$ can be identified with the group
$UL_n(\bF_q)$ of unipotent upper-triangular matrices of size $n$
over $\bF_q$. In 1960 G.~Higman asked whether the dimension of every (complex) irreducible representation of $UL_n(\bF_q)$ is a power of $q$ (thus Corollary \ref{c:dimensions} yields an affirmative answer). Later this question was advertised and popularized by J.~Thompson and A.A.~Kirillov, among others, and eventually it led Gutkin to introduce the more general groups of the form $1+A$ and formulate the more precise Theorem \ref{t:gutkin-unitary} in \cite{gutkin}.

\mbr

To the best of our knowledge, the first complete proof of Corollary
\ref{c:dimensions} was given by I.M.~Isaacs \cite{isaacs} in 1995, whereas the first complete proof of Theorem \ref{t:gutkin-unitary} when $F$
is a \emph{finite} field was given by Z.~Halasi \cite{halasi} in
2004. Unfortunately, Halasi's proof relies on the result of
\cite{isaacs}; in particular, it uses a counting argument, and it is
not clear how to adapt this argument to the case where $F$ is a
local field.

\mbr

When $F$ is a local field of characteristic $0$ (that is, $\bR$ or
$\bC$ or a finite extension of $\bQ_p$), one can prove Theorem
\ref{t:gutkin-unitary} using the orbit method for unipotent groups
over $\bR$ \cite{kirillov} or over $\bQ_p$ \cite{moore}. We briefly
explain the idea of this approach in \S\ref{ss:local-char-zero}.

\mbr

The first goal of this article is to give a proof of Theorem
\ref{t:gutkin-unitary} for an arbitrary self-dual field $F$, in a
way that is independent of the characteristic of $F$ (see \S\ref{ss:proof-unitary} and Remark \ref{r:generality}). An important
ingredient in the proof is a certain result on commutators in groups
of the form $1+A$ that was first proved in \cite{halasi}. We review
it in \S\ref{ss:free-nilpotent}.

\mbr

One can also formulate Gutkin's conjecture in a different setting.
Namely, suppose that $F$ is a nonarchimedean local field. Then the
topological group $1+A$ is totally disconnected: there exists a
basis of neighborhoods of the identity element of $1+A$ consisting
of compact open subgroups. Thus it is natural to study \emph{smooth}
complex representations of $1+A$. Our second goal is to prove the
following result:

\begin{thm}\label{t:gutkin-smooth}
Let $F$ be a nonarchimedean local field, and let $\pi:1+A\to GL(V)$ be a
smooth irreducible representation. Then $\pi$ is admissible and
unitarizable. Moreover, there exist an $F$-subalgebra $B\subset A$
and a smooth homomorphism $\al:1+B\rar{}\cst$ such that
$\pi\cong\Ind_{1+B}^{1+A}\al$ and the natural map
$\cInd_{1+B}^{1+A}\al\rar{}\Ind_{1+B}^{1+A}\al$ is an isomorphism.
\end{thm}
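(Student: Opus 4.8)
The plan is to reduce the smooth statement to the unitary one by an inductive argument on $\dim A$, exploiting the fact that $1+A$ is an inductive limit of its compact open subgroups and that smooth irreducible representations of such a group behave well under induction. First I would establish \emph{admissibility}: since $1+A$ has a countable basis of compact open subgroups $1+A_n$ (coming from the filtration $A\supset A^2\supset\cdots$ intersected with lattices), and since on each $1+A_n$ a smooth irreducible representation decomposes into finitely many types with finite multiplicities once we know irreducible representations of the \emph{finite} quotients $(1+A_n)/(1+A_m)$ are finite-dimensional, admissibility should follow by a standard argument once we bound the dimensions; Corollary~\ref{c:dimensions} applied to these finite quotients gives the needed control.

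Next I would prove the \emph{induced-from-a-character} statement. Here the natural strategy mirrors the proof of Theorem~\ref{t:gutkin-unitary}: pick a minimal nonzero ideal $I\subset A$ that is central and killed by $A$ (so $1+I$ is central in $1+A$ and isomorphic to $(I,+)$), restrict $\pi$ to $1+I$, and use a central character $\chi:1+I\to\cst$. If $\chi$ is trivial, $\pi$ factors through $1+(A/I)$ and we finish by induction on $\dim A$. If $\chi$ is nontrivial, then via self-duality of $F$ one produces, exactly as in the unitary case and using the commutator result reviewed in \S\ref{ss:free-nilpotent}, a subalgebra $A'\subsetneq A$ containing $I$ together with a character $\chi'$ of $1+I'$ (for a suitable smaller central ideal $I'$ of $A'$) such that $\pi$ is induced from a smaller-dimensional smooth irreducible representation $\pi'$ of $1+A'$; then apply the inductive hypothesis to $\pi'$ and compose inductions (induction in stages for smooth representations). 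The point is that the algebraic mechanism of Theorem~\ref{t:gutkin-unitary} is insensitive to whether one works with unitary or smooth representations—it is really a statement about the structure of the pair $(A, \chi)$.

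For \emph{unitarizability}, once $\pi\cong\Ind_{1+B}^{1+A}\al$ with $\al$ a smooth character, I would note that $\al$, being a continuous homomorphism $1+B\to\cst$, automatically takes values in the unit circle (its image is a compact—being a continuous image of a group all of whose finitely generated, hence all of whose compact-open-generated, pieces are compact torsion-ish… more precisely $1+B$ is a union of compact subgroups, so any continuous homomorphism to $\cst$ has image in the maximal compact subgroup $S^1$). Hence $\al$ is unitary, and the smooth induced representation $\Ind_{1+B}^{1+A}\al$ carries the obvious $1+A$-invariant Hermitian pairing given by $\langle f,g\rangle=\int_{(1+B)\backslash(1+A)} \overline{f(x)}g(x)\,dx$, which converges on the compactly-supported model; this is also where the isomorphism $\cInd_{1+B}^{1+A}\al\to\Ind_{1+B}^{1+A}\al$ comes in, and that isomorphism holds because $(1+B)\backslash(1+A)$ is \emph{compact}—indeed $B$ has finite codimension in $A$, so $(1+A)/(1+B)$ is a quotient of a finite-dimensional $F$-space by a lattice-like subgroup, a profinite (hence compact) set. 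Compactness of the coset space makes compact and full induction agree and makes the invariant inner product finite.

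The main obstacle I anticipate is the inductive step producing the smaller subalgebra $A'$ in the smooth setting: one must be careful that the ``polarization'' construction—choosing $A'$ so that $\pi$ is genuinely the full (smooth) induction from $1+A'$ and not merely a subquotient—goes through without the Hilbert-space machinery (Stone–von Neumann, Mackey theory) that is available in the unitary case. I expect this to require reproving the relevant Mackey-type irreducibility/induction criterion for smooth representations of the totally disconnected group $1+A$, using that $1+A$ is a strict inductive limit of compact open subgroups so that smooth representations are the same as smooth representations of a large enough compact open subgroup on which everything is finite-dimensional—thereby importing the finite-group arguments of \cite{halasi} locally. Verifying that this local-to-global passage is compatible with the central-character decomposition is the delicate point, but the self-duality of $F$ (which is what makes the Fourier-analytic step work uniformly in the characteristic) should again be exactly what is needed.
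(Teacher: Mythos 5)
Your proposal has the right overall shape---induct on $\dim_F A$, find a subalgebra from the central/commutator structure, show the induced representation is irreducible---but it contains one outright error and two genuine gaps that the paper's proof is specifically designed to avoid.

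\textbf{The error:} You assert that $(1+B)\backslash(1+A)$ is compact ``because $B$ has finite codimension in $A$,'' and use this both to get $\cInd=\Ind$ and to make the invariant inner product converge. This is false. If $B$ has codimension $d>0$ in $A$, the coset space $(1+A)/(1+B)$ is (noncanonically) homeomorphic to $F^d$, which is noncompact. (Simplest example: $A=F$ with zero multiplication, $B=0$; then $1+A\cong (F,+)$ and the quotient is $F$.) Finite codimension gives finite \emph{dimension}, not compactness. Consequently the mechanism you give for $\cInd\to\Ind$ being an isomorphism, and for the inner product integral to converge, collapses. In the paper these facts are obtained from \emph{irreducibility and admissibility} of both $\cInd_{1+A_1}^{1+A}\rho$ and $\Ind_{1+A_1}^{1+A}\rho$ (an injection between irreducibles is an isomorphism), and the inner product integral converges because elements of $\cInd$ have compact support mod $1+A_1$, not because the coset space is compact.

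\textbf{Gap 1 (admissibility):} Knowing that every irreducible representation of each finite quotient $(1+A_n)/(1+A_m)$ has dimension a power of $q$ does not bound $\dim V^K$ for a smooth irreducible $V$: the issue is \emph{multiplicities} of $K$-types, not their dimensions, and your sketch gives no control over those. Admissibility of irreducible smooth representations is a genuine theorem even in this setting; the paper obtains it from Rodier's closed-orbit admissibility criterion (Theorem~\ref{t:rodier4}) together with the inductive hypothesis applied to the stabilizer $1+A_1$. Nothing in your argument substitutes for this.

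\textbf{Gap 2 (the smooth Mackey step):} You correctly identify that the delicate point is a Mackey-type irreducibility/induction criterion for smooth representations, and you propose to re-derive it by a local-to-global passage through compact open subgroups. But you do not actually carry this out, and it is not clear it would succeed as stated: a smooth irreducible representation of $1+A$ restricted to a compact open subgroup is in general an infinite direct sum, so one cannot simply import finite-group arguments ``locally.'' The paper's route is different and more efficient: it invokes Rodier's equivalence of categories (Theorem~\ref{t:rodier3}), which is exactly the smooth analogue of the Mackey machine, after verifying via Lemma~\ref{l:key} that the relevant orbit in $(1+U)^*$ is closed and that the stabilizer is $1+A_1$. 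This reliance on Rodier's results is the essential ingredient your sketch is missing.

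Minor remark: your choice of a minimal central ideal $I$ killed by $A$ plays roughly the role of the paper's $U\subset A^{m-1}$ constructed from the commutator pairing $\Phi_\zeta$, so the starting move is comparable; the divergence is precisely in how one then proves that the induced representation is the full smooth induction, is irreducible, and is admissible.
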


In this statement, $GL(V)$ denotes the group of all linear
automorphisms of a vector space $V$ over $\bC$, and $\Ind$ and
$\cInd$ denote the operations of smooth induction and smooth induction with compact supports, respectively. All the terminology and notation used in Theorem \ref{t:gutkin-smooth} is reviewed in Section \ref{s:smooth-induction}. The theorem itself is proved in \S\ref{ss:proof-smooth}.

\mbr

It is natural to ask whether, if $\bG$ is any unipotent algebraic
group over a nonarchimedean local field $F$, then every smooth
complex irreducible representation of $\bG(F)$ is admissible and
unitarizable. At present an affirmative answer is known when
$\operatorname{char}F=0$ thanks to \cite{rodier,van-dijk}. The case
$\operatorname{char}F>0$ is open.

\mbr

All the main ideas of our approach to Theorems
\ref{t:gutkin-unitary} and \ref{t:gutkin-smooth} can already be
explained in the case where $F$ is a finite field; at the same time,
in this setting one avoids various technical complications that
arise in the study of infinite dimensional (smooth or unitary)
representations. Thus, for clarity, we treat this case first in
\S\ref{ss:finite-fields-general}.

\mbr

Section \ref{s:auxiliary} is essentially independent of, and different in spirit from, the rest of the article. In it we apply the techniques developed by Halasi in \cite{halasi} to prove Corollary \ref{c:commutator-pairing}, which is the key algebraic input in our proofs of Theorems \ref{t:gutkin-unitary} and \ref{t:gutkin-smooth}.

\mbr

In Section \ref{s:smooth-induction} we review the basics of $p$-adic representation theory and conclude by stating two theorems due to F.~Rodier \cite{rodier} that are crucial for our arguments. Finally, the complete proofs of Theorems \ref{t:gutkin-unitary} and \ref{t:gutkin-smooth} are presented in Section \ref{s:proof}.

\mbr

\noindent \textit{Acknowledgements.} My research was supported by the
NSF grant DMS-0701106 and the NSF Postdoctoral Research Fellowship
DMS-0703679. I thank Eugene Gutkin for inciting my interest in his
conjecture, and Akaki Tikaradze for many stimulating discussions. I
am grateful to Vladimir Drinfeld, from whom I learned the general
strategies for studying representations of nilpotent groups that are
employed in the article. I was also inspired by recent work of Jeff
Adler and Alan Roche \cite{adler-roche} on discrete series representations of unipotent groups over local fields.

\section{Some special cases of Gutkin's conjecture}\label{s:special-cases}

We begin this section by introducing some general notation. Then we proceed to explain how Theorem \ref{t:gutkin-unitary} can be proved, in the case where $F$ is a local field of characteristic $0$, using the classical orbit method due to Kirillov in the case where $F$ is archimedean and to C.C.~Moore in the case where $F$ is nonarchimedean. Finally we sketch a proof of Theorem \ref{t:gutkin-unitary} in the case where $F$ is finite. The proof is different from the ones that can be found in the literature \cite{halasi}, and contains all of the new ideas that allow us to prove Theorems \ref{t:gutkin-unitary} and \ref{t:gutkin-smooth} in full generality.

\subsection{Notation} Throughout this paper, $(\cdot,\cdot)$
denotes the group commutator and $[\cdot,\cdot]$ denotes the Lie
algebra commutator. We enunciate this in
\S\S\ref{sss:group-commutators}--\ref{sss:lie-brackets}.

\subsubsection{}\label{sss:group-commutators} Let $\Ga$ be a group.  If $S_1,S_2\subset\Ga$
are subsets, then $(S_1,S_2)$ denotes the subgroup of $\Ga$
generated by all elements of the form $ghg^{-1}h^{-1}$ with $g\in S_1$ and
$h\in S_2$.

\subsubsection{}\label{sss:lie-brackets} Let $A$ be an associative (not necessarily unital) ring. If
$x,y\in A$, then $[x,y]\overset{\text{def}}{=}xy-yx$. If
$S_1,S_2\subset A$ are subsets, then $[S_1,S_2]$ denotes the
additive subgroup of $A$ generated by all elements of the form
$[x,y]$ with $x\in S_1$ and $y\in S_2$.

\subsubsection{} If $A$ is an associative ring, the two-sided ideals
$A^m\subset A$ for all $m\geq 1$ and the notion of what it means for
$A$ to be nilpotent are defined as in the Introduction.

\subsubsection{} Let $A$ be a nilpotent associative ring. The
corresponding group $1+A$ is defined as in the Introduction. If
$I\subset A$ is a subring, then $1+I$ is naturally identified with a
subgroup of $1+A$. If $I$ is a two-sided ideal of $A$, then $1+I$ is
a normal subgroup of $1+A$ (the converse is usually false), and the
quotient group $(1+A)/(1+I)$ can be naturally identified with
$1+(A/I)$, where $A/I$ is the corresponding quotient ring. All these
remarks are used implicitly in what follows.

\subsection{Local fields of characteristic
$0$}\label{ss:local-char-zero}
Let us first settle Gutkin's original conjecture (Theorem \ref{t:gutkin-unitary}) in the case where the base field is $\bR$ or $\bC$.
We will use the classical orbit method due to Kirillov \cite{kirillov} together with an observation that we originally learned from a work of C.A.M.~Andr\'e \cite{andre}.

\mbr

Let $F$ be a local archimedean field, and let $A$ be a finite dimensional associative nilpotent algebra over $F$. There is no loss of generality in assuming that $F=\bR$ (if $F=\bC$ we can view $A$ as an algebra of dimension $2\cdot\dim_{\bC}A$ over $\bR$). We can view $1+A$ as a nilpotent Lie group, and its Lie algebra can be naturally identified with $A$ (where the latter is equipped with the commutator bracket $[a,b]=ab-ba$) by means of the exponential map:
\[
\exp : A \rar{\sim} 1+A, \qquad x\longmapsto 1+x+\frac{x^2}{2!}+\frac{x^3}{3!}+\dotsb
\]
(here we use both the fact that $\bR$ has characteristic $0$ and the assumption that $A$ is nilpotent).

\mbr

In general, if $G$ is a nilpotent Lie group, $\fg$ is its Lie algebra, and $f:\fg\rar{}\bR$ is a linear map, a \emph{polarization} of $f$ is a Lie subalgebra $\fh\subset\fg$ that satisfies $f([\fh,\fh])=0$ and has the maximal possible dimension subject to this property. Kirillov proved in \cite{kirillov} that every linear $f:\fg\rar{}\bR$ has a polarization $\fh$, and the corresponding induced representation $\uInd_H^G\chi_f$ of $G$ is irreducible. Here $H\subset G$ is the subgroup corresponding to $\fh$ and $\chi_f=\exp(if)$ is the unitary character of $H$ defined by $f$. Moreover, Kirillov showed that every unitary irreducible representation of $G$ is unitarily equivalent to $\uInd_H^G\chi_f$ for suitable $f$ and $\fh$.

\mbr

In view of these results, we see that Theorem \ref{t:gutkin-unitary} for $F=\bR$ follows from

\begin{lem}\label{l:andre}
Let $A$ be a finite dimensional associative nilpotent algebra over a field $\fk$, and let $f:A\rar{}\fk$ be a linear functional. Then there exists an $($associative$)$ $\fk$-subalgebra $B\subset A$ such that, as a Lie subalgebra of $A$ with respect to the commutator bracket, $B$ is a polarization of $f$.
\end{lem}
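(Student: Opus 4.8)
The plan is to prove Lemma \ref{l:andre} by induction on $\dim_\fk A$, following the classical Kirillov--Vergne style argument for constructing polarizations but keeping track of the associative structure.

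First I would dispose of the trivial case $A^2 = 0$ (equivalently, if $f$ vanishes on all commutators, take $B = A$). Otherwise, pick a minimal nonzero two-sided ideal $\fz$ of $A$ that is contained in the center of $A$ and annihilated by the ``radical''; concretely, since $A$ is nilpotent, $A \cdot A^{m-1} = A^{m-1}A = 0$ for the largest $m$ with $A^{m-1} \neq 0$, so $A^{m-1}$ is a nonzero two-sided ideal with $A^{m-1} \cdot A = A \cdot A^{m-1} = 0$ and $[A, A^{m-1}] = 0$; inside it one can find a one-dimensional subspace $\fz = \fk z$ which is automatically a two-sided ideal of $A$ with $A z = z A = 0$. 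The key point is that $\fz$ is simultaneously a Lie ideal (central) and an associative ideal with trivial products. Now distinguish two cases according to whether $f|_\fz = 0$ or not.

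If $f|_\fz = 0$: then $f$ factors through $\bar A := A/\fz$, which is again a finite dimensional nilpotent associative algebra of smaller dimension, giving $\bar f : \bar A \to \fk$. By induction there is an associative subalgebra $\bar B \subset \bar A$ which is a polarization of $\bar f$; let $B \subset A$ be its preimage, an associative subalgebra containing $\fz$. One checks $f([B,B]) = \bar f([\bar B, \bar B]) = 0$, and that $\dim B = \dim \bar B + 1$ is still maximal among Lie subalgebras isotropic for $f$, because Lie subalgebras of $A$ isotropic for $f$ and containing $\fz$ correspond bijectively (preserving the isotropy condition and dropping dimension by exactly $1$) to Lie subalgebras of $\bar A$ isotropic for $\bar f$, and any isotropic Lie subalgebra can be enlarged to contain the central line $\fz$ without losing isotropy (since $[\fh + \fz, \fh + \fz] = [\fh,\fh]$). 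If $f|_\fz \neq 0$: consider the alternating bilinear form $B_f(x,y) = f([x,y])$ on $A$, and let $\fr = \{x \in A : f([x,A]) = 0\}$ be its radical. Since $\fz$ is central, $\fz \subset \fr$; since $f|_\fz \neq 0$ but $f$ would have to vanish on $[A,A] \cap \ker f$... more precisely, the standard argument shows $\fr$ has codimension $0$ or $2$ in $A$. If codimension $0$, then $f$ vanishes on all commutators and $B = A$ works. If codimension $2$, pick $a \in A$ with $f([a,A]) \neq 0$; let $A' = \{x \in A : f([a,x]) = 0\}$, a subspace of codimension $1$, which turns out to be an associative subalgebra (this uses that $f$ vanishes on $A^3$-type terms because $f([a, xy]) = f([a,x]y) + f(x[a,y])$... one needs $f$ restricted to certain products to behave well), and apply the inductive hypothesis to $A'$ with the restriction $f|_{A'}$: the resulting polarization $B$ of $f|_{A'}$ inside $A'$ is also a polarization of $f$ in $A$ by the usual dimension count (a polarization in $A$ has dimension $\tfrac12(\dim A + \dim \fr)$, and a polarization in $A'$ has dimension $\tfrac12(\dim A' + \dim \fr') = \tfrac12((\dim A - 1) + (\dim \fr + 1)) = \tfrac12(\dim A + \dim \fr)$).

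The main obstacle, and the place where the argument genuinely differs from the Lie-algebra version, is verifying in the codimension-$2$ case that the subspace $A' = \{x : f([a,x]) = 0\}$ is actually an \emph{associative} subalgebra, not merely a Lie subalgebra. For Lie algebras one only needs $A'$ closed under bracket, which is automatic; here one must check $x, y \in A' \Rightarrow xy \in A'$, i.e. $f([a, xy]) = 0$. Using $[a,xy] = [a,x]y + x[a,y]$ this reduces to showing $f([a,x]y) = 0 = f(x[a,y])$ for $x,y \in A'$. This is not obvious and will require choosing $a$ more carefully — e.g. arranging $a$ so that $[a,A] \subset \fz$, which is possible: since $\fz = \fk z$ with $f(z) \neq 0$ and the form $B_f$ descends to a symplectic form on $A/\fr$, and one can hunt for $a$ among elements with $[a,A] \subseteq \fz$ by a further descending-central-series argument (replace $A$ by a quotient in which $\fz$ becomes the image of the commutator subalgebra, reducing to the ``Heisenberg-like'' situation). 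Once $[a,A] \subseteq \fz$ and $A\fz = \fz A = 0$, we get $[a,x]y \in \fz A = 0$ and $x[a,y] \in A\fz = 0$, so $f$ annihilates both and $A'$ is an associative subalgebra. Threading this reduction correctly — making sure the chosen $a$ has $[a,A] \subseteq \fz$ while still $f([a,A]) \neq 0$ — is the crux; everything else is the standard polarization dimension bookkeeping.
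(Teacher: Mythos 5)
Your overall strategy — induction on $\dim_\fk A$ via a one‑dimensional central ideal $\fz$, splitting on whether $f|_\fz$ vanishes — is a sensible adaptation of the Kirillov/Dixmier polarization argument, and the easy case ($f|_\fz=0$: pass to $A/\fz$, take the preimage of an inductively constructed polarization) is handled correctly. The paper itself does not prove the lemma; it cites André and Boyarchenko–Drinfeld, so I cannot compare your route against a specific in‑text argument, but your route is in the same spirit.

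There are two concrete problems. First, a definite error: the assertion that the radical $\fr = \{x : f([x,A])=0\}$ of $B_f$ ``has codimension $0$ or $2$'' is false in general — the radical of an alternating form merely has even codimension, and already for $A$ the strictly upper‑triangular $n\times n$ matrices with $n\geq 4$ the codimension can exceed $2$. Fortunately nothing in your subsequent computation uses the value $2$; the dimension bookkeeping $\frac12(\dim A' + \dim\fr')=\frac12(\dim A + \dim\fr)$ works whenever $a\notin\fr$ and $A'=\ker\bigl(x\mapsto f([a,x])\bigr)$ is a hyperplane, so this slip is harmless, but it should be removed. (Note also that the equality ``polarization dimension $= \frac12(\dim A + \dim\fr)$'' is itself a nontrivial theorem of Dixmier/Kirillov for nilpotent Lie algebras; you are entitled to cite it, but you should do so explicitly since your induction produces only \emph{associative} polarizations and you are tacitly using the Lie‑theoretic dimension formula to compare them with the maximum over all isotropic Lie subalgebras.)

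The second problem is the one you flag yourself as ``the crux,'' and it is a genuine gap: you never establish that in the case $f|_\fz\neq 0$ there exists $a\notin\fr$ with $[a,A]\subseteq\fz$. This is exactly what is needed to make $A'$ an associative subalgebra (indeed a two‑sided ideal, since then $[a,xy]=[a,x]y+x[a,y]\in\fz A + A\fz = 0$ for \emph{all} $x,y\in A$, so $A^2\subseteq A'$). Your proposed fix — ``a further descending‑central‑series argument, replace $A$ by a quotient in which $\fz$ becomes the image of the commutator subalgebra'' — is not an argument: passing to a quotient can kill $\fz$, and in general the requirement $[a,A]\subseteq\fz$ combined with $a\notin\fr$ forces $\cZ(A/\fz)\supsetneq \cZ(A)/\fz$, which is not automatic (e.g.\ for the four‑dimensional algebra with basis $x,y,z,w$ and multiplication $xy=z$, $yx=w$, all other products zero, and $\fz=\fk z$, no such $a$ exists). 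To make your approach work you would at minimum have to (i) first optimize the choice of $\fz$: try every line in $Z:=\{z : Az=zA=0\}$, reduce to the case $\dim Z=1$ and $f|_Z\neq 0$ (so $\fz=Z=A^{m-1}$ where $m$ is the nilpotence class), and (ii) then actually prove that in this remaining case $[A^{m-2},A]\neq 0$, so that some $a\in A^{m-2}$ does the job. Step (ii) is plausible but not obvious, and your sketch does neither (i) nor (ii); as written the proof is incomplete precisely at the point that distinguishes the associative problem from the classical Lie one.
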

For the proof, see \cite{andre} or \cite[Theorem D.13]{intro}.

\mbr

Next let us consider Theorem \ref{t:gutkin-unitary} in the case where $F$ is a finite extension of $\bQ_p$. As before, we may assume that $F=\bQ_p$, and then we only need to repeat the proof of the case $F=\bR$ almost verbatim, replacing the reference to \cite{kirillov} with a reference to \cite{moore}, where the orbit method for nilpotent Lie groups over $\bQ_p$ was developed.

\begin{rem}
We were somewhat sketchy in this treatment of Theorem \ref{t:gutkin-unitary} for local fields of characteristic $0$ because the argument we give in \S\ref{ss:proof-unitary} below is also valid in this setting, and provides a different approach that does not directly rely on the results of \cite{kirillov,moore}. Of course, some of the ideas we use are still directly inspired by the works of G.~Mackey, as well as Kirillov, Moore and the other pioneers of the orbit method for nilpotent groups.
\end{rem}

\subsection{Finite fields}\label{ss:finite-fields-general}
Let us conclude the section by sketching a proof of Theorem \ref{t:gutkin-unitary} in the case where $F$ is a finite field. We will be brief, because Section \ref{s:proof} below presents a self-contained proof of Theorems \ref{t:gutkin-unitary} and \ref{t:gutkin-smooth} that is valid both for finite fields and for local nonarchimedean fields. On the other hand, the argument we sketch here already contains all of the important ideas behind the proofs of Theorems \ref{t:gutkin-unitary} and \ref{t:gutkin-smooth} in full generality. Therefore we would like to present these ideas in an elementary context before reviewing the technical machinery of Section \ref{s:smooth-induction}.

\mbr

The proof is based on the following crucial result.

\begin{cor}\label{c:commutator-pairing}
Let $\fk$ be an arbitrary field, and let $A$ be an associative
nilpotent algebra over $\fk$. Fix an integer $m\geq 2$, let
$\ze:1+A^m\rar{}\cst$ be a homomorphism that is invariant under the
conjugation action of $1+A$ on $1+A^m$, and define
\[
C_\ze : (1+A)\times (1+A^{m-1}) \rar{} \cst
\]
by $C_\ze(g,h)=\ze(ghg^{-1}h^{-1})$. Then
 \sbr
\begin{enumerate}[$($a$)$]
\item $C_\ze$ factors through a map
\[
\overline{C}_\ze: \bigl( 1+(A/A^2) \bigr) \times \bigl(
1+(A^{m-1}/A^m)\bigr) \rar{} \cst;
\]
and
 \sbr
\item the map $\overline{C}_\ze$ is $\fk$-bilinear in the following sense:
 \sbr
\begin{itemize}
\item
$\overline{C}_\ze(1+x_1+x_2,1+y)=\overline{C}_\ze(1+x_1,1+y)\cdot\overline{C}_\ze(1+x_2,1+y)$
for all $x_1,x_2\in A/A^2$ and all $y\in A^{m-1}/A^m$;
 \sbr
\item
$\overline{C}_\ze(1+x,1+y_1+y_2)=\overline{C}_\ze(1+x,1+y_1)\cdot\overline{C}_\ze(1+x,1+y_2)$
for all $x\in A/A^2$ and all $y_1,y_2\in A^{m-1}/A^m$;
 \sbr
\item $\overline{C}_\ze(1+\la x,1+y)=\overline{C}_\ze(1+x,1+\la y)$ for all
$\la\in\fk$, $x\in A/A^2$, $y\in A^{m-1}/A^m$.
\end{itemize}
\end{enumerate}
\end{cor}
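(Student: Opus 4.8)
My plan is to remove $\ze$ from the problem at once and reduce everything to a purely group-theoretic statement about the group $1+A$. Set $D=(1+A,1+A^m)$; it is a normal subgroup of $1+A$ contained in $1+A^m$, and the quotient $(1+A^m)/D$ is abelian. Since $\ze$ is a conjugation-invariant homomorphism, $\ze(gxg^{-1}x^{-1})=\ze(gxg^{-1})\ze(x)^{-1}=1$ for all $g\in 1+A$ and $x\in 1+A^m$, so $\ze$ annihilates $D$ and factors through a homomorphism $\overline{\ze}\colon(1+A^m)/D\to\cst$. Next, $(1+A,1+A^{m-1})\subset 1+A^m$: passing to $(1+A)/(1+A^m)=1+(A/A^m)$, the image of $1+A^{m-1}$ is central there, since the products $A\cdot A^{m-1}$ and $A^{m-1}\cdot A$ both lie in $A^m$. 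Thus $C_\ze=\overline{\ze}\circ\widetilde{C}$, where $\widetilde{C}\colon(1+A)\times(1+A^{m-1})\to(1+A^m)/D$ sends $(g,h)$ to the class of $ghg^{-1}h^{-1}$. It therefore suffices to prove that $\widetilde{C}$ descends to a $\fk$-bilinear map $\overline{C}\colon\bigl(1+(A/A^2)\bigr)\times\bigl(1+(A^{m-1}/A^m)\bigr)\to(1+A^m)/D$; composing with $\overline{\ze}$ then yields assertions (a) and (b) simultaneously (and shows $\overline{C}_\ze$ takes values in the image of $\overline{\ze}$).

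The formal part of this is short. For $x\in 1+A^m$ one has $gxg^{-1}x^{-1}=(g,x)\in D$, so conjugation by $1+A$ acts trivially on $(1+A^m)/D$; inserting this into the commutator identities $(g_1g_2,h)=g_1(g_2,h)g_1^{-1}\cdot(g_1,h)$ and $(g,h_1h_2)=(g,h_1)\cdot h_1(g,h_2)h_1^{-1}$ shows at once that $\widetilde{C}$ is multiplicative in each variable. Similarly, if $d\in A^m$ then $(g,1+d)\in D$, so $\widetilde{C}(g,h)$ is unchanged when $h$ is replaced by $h(1+d)$; hence $\widetilde{C}$ already factors through $1+(A^{m-1}/A^m)$ in its second argument. (All of this transfers verbatim to $C_\ze$.)

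There remain two facts, and these are where the actual work lies; both are statements purely about the group $1+A$. First, $(1+A^2,1+A^{m-1})\subset D$, which lets $\widetilde{C}$ factor through $1+(A/A^2)$ in the first argument. Second, $(1+\la a)(1+b)(1+\la a)^{-1}(1+b)^{-1}\equiv(1+a)(1+\la b)(1+a)^{-1}(1+\la b)^{-1}\pmod D$ for $a\in A$, $b\in A^{m-1}$, $\la\in\fk$, which is the scalar part of bilinearity. Modulo $1+A^{m+1}$ both are elementary: a direct expansion gives $(1+u)(1+v)(1+u)^{-1}(1+v)^{-1}\equiv 1+[u,v]\pmod{1+A^{i+j+1}}$ for $u\in A^i$, $v\in A^j$, whence the first follows from the identity $[xy,z]=[x,yz]+[y,zx]$ valid in any associative ring (so $[A^2,A^{m-1}]\subset[A,A^m]$, and each $1+[a,d]$ with $a\in A$, $d\in A^m$ lies in $D$), and the second from $[\la a,b]=\la[a,b]=[a,\la b]$. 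The obstacle is that $\ze$ — equivalently, passage to $(1+A^m)/D$ — need \emph{not} kill $1+A^{m+1}$, so one must control the higher-order correction terms and show that they too land in $D$. This is precisely where the commutator result of Halasi recalled in \S\ref{ss:free-nilpotent} enters: by functoriality one reduces to the case of a free nilpotent associative algebra, where the corrections can be written explicitly as products of commutators of suitable weight, and an induction on the nilpotency class of $A$ closes the argument. I expect this reduction, and the bookkeeping of correction terms in the free algebra, to be the only genuinely delicate point. Granting it, $\overline{C}$ exists as claimed; its additivity in each variable is then automatic, because in $1+(A/A^2)$ and in $1+(A^{m-1}/A^m)$ group multiplication coincides with addition — the relevant products of ideals vanish, using $m\geq 2$ for the second factor — and composing with $\overline{\ze}$ completes the proof.
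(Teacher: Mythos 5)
Your reduction matches the paper's: both pass to the quotient $Q=(1+A^m)/D$ where $D=(1+A,1+A^m)$, so that (a) and (b) become assertions about the purely group-theoretic map $\widetilde C$ (this is exactly Proposition~\ref{p:commutator-pairing} in the paper), and both rely on Halasi's results for the genuinely hard steps. Your treatment of the formal part is, however, a real streamlining: you observe that once (a) is in place, the first two bullets of (b) follow directly from the commutator identities $(g_1g_2,h)=g_1(g_2,h)g_1^{-1}(g_1,h)$ and $(g,h_1h_2)=(g,h_1)\,h_1(g,h_2)h_1^{-1}$, together with the facts that $(1+A^m)/D$ is abelian and that conjugation by $1+A$ is trivial on it. The paper instead proves all three bullets of (b) by the free nilpotent algebra argument, which is heavier than necessary for the first two. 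For part (a) in the first variable you correctly identify the containment $(1+A^2,1+A^{m-1})\subset D$ as what is needed; the paper deduces it from Theorem~\ref{t:commutators} (Halasi's Theorem~1.4), not from the Lie-algebra identity $[xy,z]=[x,yz]+[y,zx]$ modulo $A^{m+1}$ — that identity only controls things mod $1+A^{m+1}$, exactly the obstacle you flag. For the scalar bullet your sketch is on target in spirit (lift to a free nilpotent algebra, note that $\beta(1+\widetilde\lambda\widetilde x,1+\widetilde y)\,\beta(1+\widetilde x,1+\widetilde\lambda\widetilde y)^{-1}$ lies in both $(1+J,1+J)$ and $1+J^{m+1}$, then push forward along $\varphi:J\to A$), but the mechanism you gesture at — ``an induction on the nilpotency class of $A$'' — is not how the paper closes it: the paper invokes Proposition~\ref{p:halasi}, Halasi's identity $(1+J,1+J)\cap(1+J^{k})=(1+J,1+J^{k-1})$ for a free nilpotent $J$ over an integral domain whose fraction field has characteristic $0$ (here $R=\bZ[\widetilde\lambda]$, so that $\widetilde\lambda$ is a free variable and the identity descends under $\varphi$). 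So: correct overall structure, a genuine simplification in the formal multiplicativity, and an acknowledged-but-unresolved hand-wave at the scalar step whose actual content is Proposition~\ref{p:halasi}, not an induction on nilpotency class.
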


This corollary follows immediately from Proposition \ref{p:commutator-pairing} below\footnote{Observe that $\ze$ is invariant under $(1+A)$-conjugation if and only if $\ze$ is trivial on the commutator $(1+A,1+A^m)$.}, which is proved in Section \ref{s:auxiliary} (the proof we give there is independent of any of the other results of this article).

\mbr

Now let $F$ be a finite field, let $A$ be a finite dimensional associative nilpotent $F$-algebra, and let $\pi:1+A\rar{}GL(V)$ be an irreducible representation of the finite group $1+A$, where $V$ is a finite dimensional vector space over $\bC$. Since $A$ is nilpotent, there exists a smallest integer $m\geq 1$ with the property that $\pi(1+A^m)$ consists of scalar operators. We may assume that $m\geq 2$, since otherwise $\pi$ is already $1$-dimensional and there is nothing to prove.

\mbr

We obtain a homomorphism $\ze:1+A^m\rar{}\cst$ such that $\pi(g)=\ze(g)\cdot\id_V$ for all $g\in 1+A^m$. The construction of $\ze$ implies that it is invariant under the conjugation action of $1+A$. Hence Corollary \ref{c:commutator-pairing} can be applied. Let $\overline{C}_\ze$ be defined as in that corollary and note that $\overline{C}_\ze$ is not identically $1$, since otherwise the minimality of $m$ would be contradicted.

\mbr

Since $1+(A/A^2)$ and $1+(A^{m-1}/A^m)$ are naturally identified with the additive groups of $A/A^2$ and $A^{m-1}/A^m$, respectively, we can view $\overline{C}_\ze$ as a homomorphism of abelian groups
\[
A/A^2 \rar{} (A^{m-1}/A^m)^* \overset{\text{def}}{=} \Hom(A^{m-1}/A^m,\cst).
\]
Since $F$ is finite and $\dim_F (A^{m-1}/A^m)<\infty$, we can further identify $(A^{m-1}/A^m)^*$ with the additive group of the $F$-vector space dual of $A^{m-1}/A^m$. With these identifications, the content of Corollary \ref{c:commutator-pairing}(b) is that the homomorphism
\[
\Phi_\ze : A/A^2 \rar{} (A^{m-1}/A^m)^*
\]
is actually a \emph{linear map of $F$-vector spaces}.

\mbr

Since $\Phi_\ze$ is not identically zero by construction, we can find a $1$-dimensional $F$-vector subspace $L\subset A^{m-1}/A^m$ such that the composition
\[
A/A^2 \xrar{\ \ \Phi_\ze\ \ } (A^{m-1}/A^m)^* \rar{} L^*
\]
is surjective, where the second map is given by restriction. Let $A_1\subset A$ denote the preimage of the kernel of the above composition; then $A_1$ is a codimension $1$ two-sided ideal of $A$. Also, let $U\subset A^{m-1}$ denote the preimage of $L$; then $U$ is a two-sided ideal of $A$. It is easy to see that $\ze\bigl\lvert_{(1+U,1+U)}\equiv 1$; or, equivalently, that $U\subset A_1$. (See Lemma \ref{l:annihilate-commutator} for the proof.)

\mbr

Write $G=1+A$, $H=1+A_1$, $N=1+U$. Then $N\subset H\subset G$, and both $N$ and $H$ are normal subgroups of $G$. Let $\rho$ be an irreducible summand of the restriction $\pi\bigl\lvert_H$. If we show that $\Ind_H^G\rho$ is irreducible, Frobenius reciprocity will imply that $\pi\cong\Ind_H^G\rho$ and the proof will be complete\footnote{By induction on $\dim_F A$, we may assume that Gutkin's conjecture holds for $A_1$ in place of $A$.}.

\mbr

To check that $\Ind_H^G\rho$ is irreducible, we apply Mackey's irreducibility criterion. Suppose $g\in G$ is such that $g$ leaves the isomorphism class of $\rho$ invariant. Since $\ze$ annihilates the commutator $(H,N)$ by construction, Schur's lemma implies that $\rho$ acts on $N$ via some scalar $\chi:N\rar{}\cst$. We then necessarily have $\chi\bigl\lvert_{1+A^m}=\ze$, and the definition of $H$ implies that $H$ is precisely the stabilizer of $\chi$ under the conjugation action of $G$ on $N$. Since we assumed that $g$ leaves the isomorphism class of $\rho$ invariant, \emph{a fortiori}, $g$ must leave $\chi$ invariant, whence $g\in H$. This verifies the hypothesis of Mackey's irreducibility criterion (in the special case where one is inducing from a normal subgroup of a finite group), and the proof is complete.

\begin{rem}
The proof sketched above has some ideas in common with the proof of Theorem \ref{t:gutkin-unitary} that was presented in \cite{halasi} in the case where $F$ is a finite field. However, that proof also contains some ingredients that seem to have no analogues in the case where $F=\bF_q((t))$. We were able to circumvent this difficulty by using Corollary \ref{c:commutator-pairing}.
\end{rem}

\section{Commutators in groups of the form $1+A$}\label{s:auxiliary}

This section is essentially independent of the rest of the article, in the sense that only the statement of Proposition \ref{p:commutator-pairing} below is used elsewhere in the text, and the proofs we present here rely exclusively on the results and techniques developed in \cite{halasi}.

\subsection{The commutator pairing} The main goal of this section is
the following

\begin{prop}\label{p:commutator-pairing}
Let $\fk$ be an arbitrary field, and let $A$ be an associative
nilpotent algebra over $\fk$. Fix an integer $m\geq 2$, put
\[
Q=(1+A^m)/(1+A,1+A^m),
\]
and define
\[
C : (1+A)\times (1+A^{m-1}) \rar{} Q
\]
by letting $C(g,h)$ be the image of $ghg^{-1}h^{-1}\in 1+A^m$ in
$Q$. Then
 \sbr
\begin{enumerate}[$($a$)$]
\item $C$ factors through a map
\[
\overline{C}: \bigl( 1+(A/A^2) \bigr) \times \bigl(
1+(A^{m-1}/A^m)\bigr) \rar{} Q;
\]
and
 \sbr
\item the map $\overline{C}$ is $\fk$-bilinear in the following sense:
 \sbr
\begin{itemize}
\item
$\overline{C}(1+x_1+x_2,1+y)=\overline{C}(1+x_1,1+y)\cdot\overline{C}(1+x_2,1+y)$
for all $x_1,x_2\in A/A^2$ and all $y\in A^{m-1}/A^m$;
 \sbr
\item
$\overline{C}(1+x,1+y_1+y_2)=\overline{C}(1+x,1+y_1)\cdot\overline{C}(1+x,1+y_2)$
for all $x\in A/A^2$ and all $y_1,y_2\in A^{m-1}/A^m$;
 \sbr
\item $\overline{C}(1+\la x,1+y)=\overline{C}(1+x,1+\la y)$ for all
$\la\in\fk$, $x\in A/A^2$, $y\in A^{m-1}/A^m$.
\end{itemize}
\end{enumerate}
\end{prop}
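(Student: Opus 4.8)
The plan is to prove both assertions by direct computation with the group law on $1+A$, reducing everything to identities in the associative ring $A$ modulo appropriate powers of the augmentation ideal. The key point is that the target $Q=(1+A^m)/(1+A,1+A^m)$ is designed precisely to kill the "error terms" that obstruct bilinearity at the level of $1+A^m$ itself, so the strategy is to compute $ghg^{-1}h^{-1}$ for $g=1+x\in 1+A$ and $h=1+y\in 1+A^{m-1}$ explicitly and track which terms land in $(1+A,1+A^m)$.

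First I would record the basic commutator formula. Writing $g=1+x$ with $x\in A$ and $h=1+y$ with $y\in A^{m-1}$, one computes $gh-hg=[x,y]\in A^m$, and more precisely $ghg^{-1}h^{-1}=1+z$ where $z\equiv xy-yx \pmod{A^{m+1}}$ — indeed, since $y\in A^{m-1}$, any product involving $y$ and at least two factors from $A$ lies in $A^{m+1}$, and similarly two factors of $y$ would lie in $A^{2m-2}\subset A^{m+1}$ as $m\geq 2$... wait, $2m-2\geq m$ iff $m\geq 2$, but we need strictly more; in any case such terms land in $(1+A,1+A^m)$ after the reductions below, so I would keep the computation honest by carrying it out to the needed order. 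The cleanest route: show $C(g,h)$ depends only on $z \bmod A^{m+1}$ together with the relation defining $Q$, and that $z\bmod A^{m+1}$ is a function only of $x\bmod A^2$ and $y\bmod A^m$ which is additive in each variable. For part (a), the claim that $C(g,h)$ is unchanged when $x$ is replaced by $x+A^2$ or $y$ by $y+A^m$: replacing $y$ by $y+y'$ with $y'\in A^m$ changes $h$ by right multiplication by an element of $1+A^m$, and a short manipulation shows $ghg^{-1}h^{-1}$ changes by conjugation by $g$ of an element of $1+A^m$, i.e. by an element of $(1+A,1+A^m)$, hence is trivial in $Q$; the case of replacing $x$ by $x+x'$ with $x'\in A^2$ is handled the same way using that $[A^2, A^{m-1}]\subset A^{m+1}$ and controlling the lower-order interaction.

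For part (b), bilinearity and the scalar identity, I would expand $\overline{C}(1+x_1+x_2,1+y)$ versus $\overline{C}(1+x_1,1+y)\cdot\overline{C}(1+x_2,1+y)$: both equal $1+[x_1+x_2,y]$ resp. $(1+[x_1,y])(1+[x_2,y])=1+[x_1,y]+[x_2,y]+[x_1,y][x_2,y]$ modulo higher terms, and $[x_i,y]\in A^m$ so $[x_1,y][x_2,y]\in A^{2m}\subset A^{m+1}$; combined with the fact (from part (a)'s proof) that everything in $A^{m+1}$ together with commutators $(1+A,1+A^m)$ dies in $Q$, the two sides agree. The second bullet is symmetric. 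The scalar relation $\overline{C}(1+\la x, 1+y)=\overline{C}(1+x,1+\la y)$ follows from $[\la x, y]=\la[x,y]=[x,\la y]$ in $A^m$. The honest bookkeeping is exactly the content of Halasi's lemmas on the lower central series of $1+A$, which I would cite from \cite{halasi} rather than reprove: specifically, that $(1+A^i, 1+A^j)\subseteq 1+A^{i+j}$ and that the relevant successive quotients are abelian and computed by the Lie bracket.

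The main obstacle I anticipate is \emph{not} the algebra of commutators — that is routine — but rather making precise the reduction "everything in $A^{m+1}$ dies in $Q$ after accounting for $(1+A,1+A^m)$." This is subtle because $Q$ only quotients out the \emph{subgroup generated by} commutators $(1+a)(1+c)(1+a)^{-1}(1+c)^{-1}$ with $a\in A$, $c\in A^m$, and one must check that the specific error terms produced — which a priori sit in $1+A^{m+1}$ but need not obviously be such commutators — do in fact lie in $(1+A,1+A^m)$. The right way to see this is to show $(1+A^{m+1})\cdot(\text{something}) \subseteq (1+A,1+A^m)$, or more cleanly that modulo $(1+A,1+A^m)$ the subgroup $1+A^m$ becomes a quotient of $1+(A^m/A^{m+1})$ — equivalently $(1+A, 1+A^m) \supseteq 1+A^{m+1}$, which again is one of Halasi's results (it follows from $[A,A^m]$ together with multiplicativity spanning enough of $A^{m+1}$, using nilpotence as the induction anchor). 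Once that containment is in hand, every computation above is legitimate modulo $1+A^{m+1}\subseteq (1+A,1+A^m)$, and parts (a) and (b) drop out.
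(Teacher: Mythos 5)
Your computation of the leading term $[x,y]$ is sound, and you correctly flagged that the whole argument hinges on showing that the "error terms," which sit in $1+A^{m+1}$, actually lie in $(1+A,1+A^m)$. But your proposed resolution of that obstacle is false: the containment $1+A^{m+1}\subseteq (1+A,1+A^m)$ does not hold in general. For a concrete counterexample take $A$ commutative and nilpotent with $A^{m+1}\neq 0$ (e.g.\ $A=t\,\fk[t]/(t^n)$ with $n>m+1$): then $(1+A,1+A^m)$ is trivial while $1+A^{m+1}$ is not. So you cannot reduce modulo $A^{m+1}$ in $Q$, and the key step of the argument has a hole. (Your appeal to "Halasi's result" here is also misplaced; his Theorem~1.4 gives $(1+A^i,1+A^j)\subseteq (1+A,1+A^{i+j-1})$, not $1+A^{m+1}\subseteq(1+A,1+A^m)$.)

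What the paper actually proves (and needs) is much more restrictive and subtle: the \emph{specific} error terms, which are products of commutators, hence lie in $(1+A,1+A)\cap(1+A^{m+1})$, belong to $(1+A,1+A^m)$. This is not something that can be verified by an intrinsic computation in $A$; it requires lifting to a free nilpotent algebra $J$, where Halasi's Lemma~2.5 (Proposition~\ref{p:halasi} in the paper, valid over integral domains with characteristic-zero fraction field) gives the identity $(1+J,1+J)\cap(1+J^{k})=(1+J,1+J^{k-1})$. One expresses $y$ as a sum of $(m-1)$-fold products in $A$, introduces formal symbols, and runs the computation in the free algebra $J$ where the intersection identity applies, then pushes forward along the structure map $J\to A$. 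The scalar-compatibility bullet has an extra wrinkle: to encode $\la\in\fk$ one works over $R=\bZ[\widetilde\la]$ rather than $\bZ$. None of these ideas appear in your proposal, and without them (or some substitute) the reduction you sketched fails. Part~(a) is also handled in the paper by citing $(1+A^2,1+A^{m-1})\subseteq(1+A,1+A^m)$ from Halasi's Theorem~1.4, which is the right tool there and avoids any appeal to the false inclusion.
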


This result will be proved in \S\ref{ss:proof-p:commutator-pairing} below after reviewing some preliminaries.

\subsection{Free nilpotent algebras}\label{ss:free-nilpotent}
Let $R$ be an arbitrary
associative, commutative, unital ring, let $X$ be a set, and let
$n\in\bN$. If $A$ is any nilpotent associative algebra over $R$,
we call the {\em nilpotence class} of $A$ the smallest integer $k$
such that $A^k=0$. The {\em free
associative nilpotent algebra of nilpotence class $n$ generated by
the set $X$ over $R$} is denoted by $F_R(n,X)$ and is constructed as follows. First
let $\Span_R(X)$ denote the free $R$-module generated by the set
$X$, and let
\[
T = R\langle X\rangle = \bigoplus_{m\geq 0} \bigl( \Span_R(X)
\bigr)^{\otimes m}
\]
be the free associative unital $R$-algebra on the set $X$. It is
naturally graded, and we will write
\[
T_j = \bigl( \Span_R(X) \bigr)^{\otimes j} \quad \text{and} \quad
T_{\geq m}=\bigoplus_{j\geq m} T_j.
\]
Writing $J=F_R(n,X)$ for simplicity, we define
\[
J = T_{\geq 1} \bigl/ T_{\geq n}.
\]
Thus $J$ inherits a grading from $T$. It is clear that $J$
satisfies a universal property with respect to all set maps
from $X$ to associative nilpotent algebras over $R$ of
nilpotence class $\leq n$.

\begin{prop}\label{p:halasi}
Suppose that $R$ is an integral domain whose field of fractions has characteristic $0$. If $J=F_R(n,X)$ for some set $X$ and some integer $n\geq 1$, then for all
$k\geq 2$, we have
\[
(1+J,1+J)\cap (1+J^k) = (1+J,1+J^{k-1}).
\]
\end{prop}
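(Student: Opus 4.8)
The plan is to prove the equality of subgroups
\[
(1+J,1+J)\cap(1+J^k) = (1+J,1+J^{k-1})
\]
by induction on $n$ (the nilpotence class of $J=F_R(n,X)$), the case $n\leq k$ being trivial since then both sides are trivial or the statement collapses. The inclusion $\supseteq$ is immediate: a basic commutator $ghg^{-1}h^{-1}$ with $g\in 1+J$, $h\in 1+J^{k-1}$ lies in $1+J^k$ (expand $gh-hg\in J^k$ since one factor is in $J^{k-1}$), and it obviously lies in $(1+J,1+J)$. So the content is the inclusion $\subseteq$, and the point is to show that any element of $(1+J,1+J)$ that happens to land in $1+J^k$ can already be rewritten as a product of commutators of the restricted form $[1+J,1+J^{k-1}]$.

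The mechanism I would use is reduction modulo the top graded piece. Set $\overline{J}=J/J^{n-1}=F_R(n-1,X)$; by the induction hypothesis the identity holds in $\overline{J}$, so if $w\in(1+J,1+J)\cap(1+J^k)$, its image $\overline{w}$ in $1+\overline{J}$ lies in $(1+\overline{J},1+\overline{J}^{k-1})$, hence can be written as a product of commutators $\prod_i[1+a_i,1+b_i]$ with $a_i\in J$, $b_i\in J^{k-1}$. Lift this relation back to $1+J$: the product $\prod_i[1+a_i,1+b_i]$ (same representatives) is an element $w'\in(1+J,1+J^{k-1})$, and $w(w')^{-1}$ lies in $1+J^{n-1}$ because it dies in $1+\overline{J}$; moreover $w(w')^{-1}$ still lies in $(1+J,1+J)\cap(1+J^k)$, hence (using $n-1\geq k$, which is the only interesting range) in $(1+J,1+J)\cap(1+J^{n-1})$. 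So it suffices to treat the ``top'' case: show that any element of $(1+J,1+J)\cap(1+J^{n-1})$ lies in $(1+J,1+J^{n-2})$ — here $J^{n-1}$ is central and killed by one more multiplication, so the group law on $1+J^{n-1}$ is just addition, and the commutator map $(1+A/A^2)\times(1+J^{n-2}/J^{n-1})\to J^{n-1}$ (or rather into the relevant quotient) is bilinear. This is precisely where the machinery of \cite{halasi} — the explicit description of the abelianization $(1+J)/(1+J,1+J)$ in the free nilpotent case, and the identification of the graded pieces of the lower central series of $1+J$ with free Lie-type modules — is meant to be invoked: one needs that the image of $(1+J,1+J)$ in $J^{n-1}$ (modulo $J^n=0$) is exactly the submodule spanned by Lie brackets, and that every such bracket $[x,y]$ with $\deg x + \deg y = n-1$ and $\deg y \geq 1$ can be realized as (the leading term of) a commutator $[1+x,1+y']$ with $1+y'\in 1+J^{n-2}$.

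The hard part, and the place I would spend the most care, is this last step: pinning down the image of $(1+J,1+J)$ in the degree-$(n-1)$ piece and matching it against what $(1+J,1+J^{n-2})$ produces there. Concretely, $[1+x,1+y]=1+([x,y]+\text{higher})$, and modulo $J^n$ one gets exactly the Lie bracket $[x,y]\in J^{n-1}$ when $\deg x+\deg y=n-1$; so the question becomes whether the $R$-submodule of $J^{n-1}$ generated by all $[x,y]$ (arbitrary degrees summing to $n-1$) coincides with the submodule generated by those $[x,y]$ with $\deg y\geq 1$, i.e. with $y\in J$ — but that is automatic since every generator has $\deg y\geq 1$ anyway, and the real issue is whether an element of $(1+J,1+J)$ landing in $J^{n-1}$ has its degree-$(n-1)$ component in this bracket submodule, which is where the characteristic-$0$ hypothesis on $\Frac(R)$ enters (via the Baker–Campbell–Hausdorff-type identities, or via Halasi's structural results, that let one replace products of commutators by single ``collected'' commutators up to terms handled by the induction). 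Granting the cited results of \cite{halasi}, the argument above assembles into a clean induction; without them, reproving the relevant statement about the lower central series of $1+F_R(n,X)$ would be the bulk of the work.
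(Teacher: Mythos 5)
Your reduction to the top case is sound: passing to $\overline{J} = J/J^{n-1} \cong F_R(n-1,X)$, applying the inductive hypothesis there, lifting, and comparing in $1+J^{n-1}$ does reduce the proposition for arbitrary $k$ to the single containment $(1+J,1+J)\cap(1+J^{n-1}) \subseteq (1+J,1+J^{n-2})$ for $J=F_R(n,X)$; the degree bookkeeping (using $n-1\geq k$ to place $w(w')^{-1}$ in $1+J^{n-1}$, and $n-2\geq k-1$ to land back in $(1+J,1+J^{k-1})$) checks out.

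The difficulty is that this top case carries essentially all of the content, and you do not prove it. The characteristic-$0$ hypothesis, the passage from products of arbitrary commutators to commutators with one leg in $J^{n-2}$, and the identification of the image of $(1+J,1+J)\cap(1+J^{n-1})$ inside $J^{n-1}$ with $[J,J^{n-2}]$ are exactly what is at stake, and you defer all of it to \cite{halasi}. Your discussion of that step also contains a false move: the comparison you declare ``automatic'' ($\deg y\geq 1$ on all generators) is not the relevant one, since $(1+J,1+J^{n-2})$ only produces brackets with $\deg y\geq n-2$; you notice this two sentences later, but then again hand off to the citation. So what you have is a correct reduction wrapped around an appeal to the source, not a proof.

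For context, the paper's own proof of this proposition is itself a citation: it invokes Halasi's Lemma~2.5 for $R=\bZ$ and records that the argument of \S2 of \cite{halasi} goes through line-by-line over any integral domain $R$ with characteristic-$0$ fraction field. So you and the paper stop at the same wall. The paper cites Halasi's result in the form stated; you cite only the case $k=n-1$. The reduction is a legitimate and clean observation, but since it is precisely the top case that requires the free-Lie/\,$\bQ$-form machinery, you have not shortened the portion of Halasi's argument you would actually need to reproduce --- at best you have rearranged the bookkeeping around it.
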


\begin{proof}
When $R=\bZ$ this is \cite[Lem.~2.5]{halasi}. In general one can check, line-by-line, that all the arguments in \S2 of \emph{op.~cit.} remain valid if one replaces $\bZ$ by an arbitrary integral domain $R$ of characteristic $0$ and $\bQ$ by the field of fractions of $R$.
\end{proof}

\begin{thm}\label{t:commutators}
If $A$ is an arbitrary associative nilpotent ring, then for all
$m,n\in\bN$, we have
\[
(1+A^m,1+A^n) \subseteq (1+A,1+A^{m+n-1}).
\]
\end{thm}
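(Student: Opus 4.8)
The plan is to reduce Theorem \ref{t:commutators} to the case of a free nilpotent algebra over a suitable ground ring, where Proposition \ref{p:halasi} applies, and then transfer the inclusion back to an arbitrary $A$ by functoriality. The subtlety is that Proposition \ref{p:halasi} requires the ground ring to have characteristic $0$, whereas $A$ in Theorem \ref{t:commutators} is allowed to be a ring of arbitrary characteristic; I will need to exhibit $A$ (or rather the relevant quotient of $A$) as a homomorphic image of a free nilpotent algebra defined over $\bZ$, which has characteristic $0$.

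\medskip

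\noindent\textbf{Step 1: reduce to a universal statement about free algebras over $\bZ$.} First I would observe that, since $A$ is nilpotent, there is some $N$ with $A^N=0$, so we may freely work modulo $A^N$ and assume the nilpotence class is finite. The inclusion $(1+A^m,1+A^n)\subseteq(1+A,1+A^{m+n-1})$ is generated by commutators $(1+a,1+b)$ with $a\in A^m$, $b\in A^n$; each such commutator involves only finitely many elements of $A$ to witness the memberships $a\in A^m$ and $b\in A^n$ (namely $a$ is a $\bZ$-linear combination of products $a_1\cdots a_m$ and similarly for $b$). Hence it suffices to prove the following universal statement: if $J=F_{\bZ}(k,X)$ is the free associative nilpotent $\bZ$-algebra of some class $k$ on a finite set $X$, and $J_j$ denotes its degree-$\geq j$ part $T_{\geq j}/T_{\geq k}$ — which coincides with the power $J^j$ — then $(1+J^m,1+J^n)\subseteq(1+J,1+J^{m+n-1})$. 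Indeed, given $a\in A^m$ and $b\in A^n$ expressed through finitely many generators, choose $X$ and $k$ large enough and a homomorphism $F_{\bZ}(k,X)\to A$ carrying designated generators to those elements; the universal property guarantees such a map exists, it carries $J^m$ into $A^m$ and $J^n$ into $A^n$, and a group homomorphism $1+J\to 1+A$ carries $(1+J,1+J^{m+n-1})$ into $(1+A,1+A^{m+n-1})$, so the conclusion descends.

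\medskip

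\noindent\textbf{Step 2: prove the inclusion in the free case by induction.} With $J$ free over $\bZ$, I would argue by induction on $m$ (for all $n$ simultaneously). The base case $m=1$ is trivial since $(1+J,1+J^n)\subseteq(1+J,1+J^{n})$. For the inductive step I would use Proposition \ref{p:halasi}, which gives $(1+J,1+J)\cap(1+J^k)=(1+J,1+J^{k-1})$ for every $k\geq2$. The strategy is: a generator of $(1+J^m,1+J^n)$ is a commutator $(1+a,1+b)$ with $a\in J^m$, $b\in J^n$; since $J^m\subseteq J$ and this commutator visibly lies in $1+J^{m+n}$ (standard: $(1+a)(1+b)(1+a)^{-1}(1+b)^{-1}\in 1+[a,b]+J^{m+n+1}$, and $[a,b]\in J^{m+n}$), we have $(1+J^m,1+J^n)\subseteq(1+J,1+J)\cap(1+J^{m+n})$, which by Proposition \ref{p:halasi} equals $(1+J,1+J^{m+n-1})$. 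Wait — this already gives the conclusion directly when $m+n\geq 2$, without induction on $m$; the key point is simply that every element of $(1+J^m,1+J^n)$ lies in the intersection $(1+J,1+J)\cap(1+J^{m+n})$. So the real content of Step 2 is just: (i) $(1+J^m,1+J^n)\subseteq 1+J^{m+n}$, an elementary computation with the multiplication $(1+x)(1+y)=1+(x+y+xy)$ and the grading; (ii) $(1+J^m,1+J^n)\subseteq(1+J,1+J)$, which is immediate since $J^m,J^n\subseteq J$; (iii) apply Proposition \ref{p:halasi} with $k=m+n$ to conclude $(1+J,1+J)\cap(1+J^{m+n})=(1+J,1+J^{m+n-1})$.

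\medskip

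\noindent\textbf{Main obstacle.} The genuinely delicate point is the characteristic issue in Step 1: Proposition \ref{p:halasi} is false in positive characteristic (this is exactly why Halasi's commutator identity is stated over $\bZ$), so one cannot apply it directly to $A$. The resolution — lifting to a free $\bZ$-algebra and pushing the conclusion forward along the canonical surjection — works precisely because the inclusion we want to prove is a "positive" statement (one subgroup contained in another), which is preserved by arbitrary group homomorphisms $1+J\to 1+A$, and because every element of $A^m$ only involves finitely much data so it factors through a finitely generated free nilpotent algebra. I would take care to state the universal property of $F_{\bZ}(k,X)$ precisely enough (maps of sets $X\to A$ with image in a nilpotent algebra of class $\leq k$ extend uniquely to algebra homomorphisms) and to note that under such a map $J^j\mapsto$ (the subring generated by $j$-fold products) $\subseteq A^j$. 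Everything else is routine grading bookkeeping.
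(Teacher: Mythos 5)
The paper does not prove Theorem \ref{t:commutators}; it simply cites it as Theorem~1.4 of \cite{halasi}. Your proof is nonetheless correct, and it mirrors exactly the technique the paper itself uses in the proof of Proposition~\ref{p:commutator-pairing}: pull the data back to a finitely generated free nilpotent $\bZ$-algebra $J$ via the universal property, establish the containment there using Proposition~\ref{p:halasi}, and push forward along the resulting ring homomorphism $\vp:J\to A$, which induces a group homomorphism $1+J\to 1+A$ carrying $(1+J,1+J^{m+n-1})$ into $(1+A,1+A^{m+n-1})$. In the free case your two observations --- that $(1+J^m,1+J^n)\subseteq 1+J^{m+n}$ by the elementary commutator computation, and that $(1+J^m,1+J^n)\subseteq(1+J,1+J)$ trivially --- put a generating set of $(1+J^m,1+J^n)$ inside $(1+J,1+J)\cap(1+J^{m+n})=(1+J,1+J^{m+n-1})$, and since the target is a subgroup this suffices (here $m+n\geq 2$, so Proposition~\ref{p:halasi} applies). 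The reduction to a finitely generated $J$ is legitimate for the reason you give: each commutator generator of $(1+A^m,1+A^n)$ depends on only finitely many elements of $A$ witnessing $a\in A^m$ and $b\in A^n$, and the target subgroup $(1+A,1+A^{m+n-1})$ absorbs each generator separately. This is presumably how Halasi deduces his Theorem~1.4 from his Lemma~2.5, and it is the argument the present paper implicitly has in mind.
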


This is precisely Theorem 1.4 in \emph{op.~cit.}

\subsection{Proof of Proposition \ref{p:commutator-pairing}}\label{ss:proof-p:commutator-pairing}

We begin with

\begin{lem}\label{l:auxiliary}
Let $N$ be an associative ring, and let $m\geq 2$ be an integer such
that $N^{m+1}=0$. If $x\in N$ and $y\in N^{m-1}$, then
$(1+x)(1+y)(1+x)^{-1}(1+y)^{-1}=1+[x,y]$ in $1+N$.
\end{lem}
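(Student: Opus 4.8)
The plan is to compute directly in the ring $N$, using the nilpotence hypothesis $N^{m+1}=0$ to kill all high-degree terms. First I would note that $(1+x)^{-1}=1-x+x^2-x^3+\dotsb$, a finite sum since $N$ is nilpotent; similarly $(1+y)^{-1}=1-y$ already because $y\in N^{m-1}$ forces $y^2\in N^{2m-2}\subseteq N^{m+1}=0$ (here we use $m\geq 2$, so $2m-2\geq m+1$ iff $m\geq 3$; for $m=2$ one has $2m-2=2=m$, and $y\in N^{m-1}=N$ with $y^2\in N^2$, which need \emph{not} vanish — so I should be more careful and instead observe that what actually matters is products with $x$, see below).

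The cleaner route: set $u=(1+x)(1+y)(1+x)^{-1}$. Since $1+y\in 1+N^{m-1}$ and conjugation preserves the normal subgroup $1+N^{m-1}$, we have $u=1+y'$ with $y'\in N^{m-1}$. Then $(1+x)(1+y)(1+x)^{-1}(1+y)^{-1}=(1+y')(1+y)^{-1}=1+(y'-y)+\text{(terms in }N^{m-1}\cdot N^{m-1}\subseteq N^{2m-2}\subseteq N^{m+1}=0)$, so it equals $1+(y'-y)$. Wait — $N^{2m-2}\subseteq N^{m+1}$ requires $2m-2\geq m+1$, i.e. $m\geq 3$; for $m=2$ this step fails. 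So I will instead expand everything explicitly and track degrees relative to the single ``low-degree'' variable $x$: every surviving monomial must involve $x$ at most once, because $x^2\cdot(\text{anything in }N^{m-1})\in N^{m+1}=0$ and more simply $x\cdot(\text{something})\cdot x\in N^{m+1}$ whenever the ``something'' lies in $N^{m-1}$.

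So the key computation is: write $(1+x)^{-1}=1-x+r$ with $r\in N^2$, expand the product $(1+x)(1+y)(1-x+r)(1-y)$, and discard any monomial lying in $N^{m+1}$. A monomial from this expansion is a product of a subset of $\{x,y,\text{(a factor of }r),x\text{ or }y \text{ from }(1-x+r),\,y\}$ in order; since $y,r$-factors already sit in $N^{m-1}$ resp. $N^2$, any term containing two $y$'s lies in $N^{2m-2}$, any term containing a $y$ and an $r$-factor lies in $N^{m+1}$, any term containing two $x$'s and a $y$ lies in $N^{m+1}$. Hence after reduction modulo $N^{m+1}$ the only surviving terms are: $1$, $y$ (from $1\cdot y\cdot 1\cdot 1$), $-y$ (from $1\cdot 1\cdot 1\cdot(-y)$), $xy$ (from $x\cdot y\cdot 1\cdot 1$), and $-yx$ (from $1\cdot y\cdot(-x)\cdot 1$); the term $1\cdot y\cdot 1\cdot(-y)=-y^2$ and all $r$-involving terms vanish for the reasons above, and so does $x\cdot y\cdot(-x)$. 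Collecting, the product equals $1+xy-yx=1+[x,y]$, as claimed. I do not expect any genuine obstacle here — the only subtlety, and the one worth being explicit about in the write-up, is the $m=2$ case where one cannot simply say $y^2=0$ and must argue via degree-counting of monomials (equivalently, note $y^2\in N^2$ but every \emph{occurrence} of $y^2$ in the expansion is either isolated, hence cancels with the $+y\cdot y$ from a different grouping, or is multiplied by an $x$ and thus lies in $N^{m+1}$). I would phrase the final argument in the monomial-bookkeeping form to handle all $m\geq 2$ uniformly.
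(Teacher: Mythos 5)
Your basic strategy — compute directly in $N$ and discard terms in $N^{m+1}$ — is the same as the paper's, but your write-up has a genuine gap precisely at the $m=2$ case you flag. The product you propose to expand, $(1+x)(1+y)(1-x+r)(1-y)$, is \emph{not} the commutator when $m=2$, since $(1+y)^{-1}=1-y+y^2-\dotsb$ and $y^2\in N^2$ need not vanish (indeed $N^{2m-2}=N^2\not\subseteq N^{m+1}=N^3$). If you carry out your expansion with $(1-y)$ in the last slot, you get $1+xy-yx-y^2$, which is wrong by $-y^2$. Your list of surviving monomials asserts that the term $-y^2$ ``vanishes for the reasons above,'' but those reasons only give $y^2\in N^{2m-2}$, which is no help for $m=2$; and the cancelling $+y^2$ you invoke in the closing parenthetical does not appear anywhere in the product you actually wrote down — it would only enter if you kept the $y^2$ term in $(1+y)^{-1}$. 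So as stated, the argument is internally inconsistent for $m=2$: you acknowledge that $y^2\neq 0$ but then handle the $-y^2$ as if it did.

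The paper avoids this entirely by computing in two clean stages. First, $(1+x)(1+y)(1+x)^{-1}=1+(1+x)y(1+x)^{-1}=1+y+xy-yx$, using only $xyx=yx^2=0$ (both visibly in $N^{m+1}$). Second, $(1+y+xy-yx)(1+y)^{-1}=1+(xy-yx)(1+y)^{-1}=1+xy-yx$, because $(xy-yx)y=xy^2-yxy=0$: crucially, $xy^2$ and $yxy$ lie in $N^{2m-1}\subseteq N^{m+1}$ \emph{for all} $m\geq 2$ (note $2m-1\geq m+1\iff m\geq 2$), so the $m=2$ case is handled with no extra bookkeeping. If you want to salvage your monomial approach, replace $(1-y)$ by $1-y+s$ with $s\in N^{2m-2}$ and track the genuine cancellation $s-y\cdot y$ in the lowest degree; but it is simpler to observe, as the paper does, that after conjugation the ``error'' is multiplied on the right by $y$, and $xy^2=yxy=0$ already for $m\geq 2$.
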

\begin{proof}
We have $xyx=0=yx^2$ in $N$. Using this observation, we calculate
\begin{eqnarray*}
(1+x)(1+y)(1+x)^{-1} &=& 1 + (1+x)y(1+x)^{-1} \\
&=& 1 + (1+x)y(1-x) = 1+y+xy-yx.
\end{eqnarray*}
Since $m\geq 2$, we also have $xy^2=yxy=0$. Thus
\[
(1+y+xy-yx)\cdot(1+y)^{-1}=1+(xy-yx)(1+y)^{-1}=1+xy-yx,
\]
as claimed.
\end{proof}

Now Proposition \ref{p:commutator-pairing}(a) follows from the fact that $(1+A^2,1+A^{m-1})\subset(1+A,1+A^m)$, which is a special case of Theorem \ref{t:commutators}.

\mbr

Let us prove the first assertion of Proposition \ref{p:commutator-pairing}(b).
For the purposes of this argument, let us introduce the notation $\be(g,h)=ghg^{-1}h^{-1}$, where $g,h$ are elements of some group. The first assertion of Proposition \ref{p:commutator-pairing}(b) amounts to the following claim: if $x_1,x_2\in A$ and $y\in A^{m-1}$, then
\begin{equation}\label{e:1}
\be(1+x_1+x_2,1+y)\cdot\be(1+x_1,1+y)^{-1}\cdot\be(1+x_2,1+y)^{-1}\in (1+A,1+A^m).
\end{equation}
To see this, let us write $y=\sum_{j=1}^N a_{1,j} a_{2,j} \dotsm a_{(m-1),j}$
for some elements $a_{i,j}\in A$, where $N$ is a positive integer, $1\leq i\leq m-1$ and $1\leq j\leq N$. (We can find such a presentation by the definition of $A^{m-1}$.) Introduce a set $X$ of formal symbols as follows:
\[
X = \bigl\{ \widetilde{x}_1, \widetilde{x}_2 \bigr\} \cup \bigl\{ \widetilde{a}_{i,j} \bigr\}_{1\leq i\leq m-1,\ 1\leq j\leq N}.
\]
Let $n$ be the nilpotence class of $A$ and consider $J=Fr_{\bZ}(n,X)$, the free associative nilpotent algebra of nilpotence class $n$ generated by the set $X$ over $\bZ$ (see \S\ref{ss:free-nilpotent}).

\mbr

Let $\vp:J\rar{}A$ be the homomorphism of associative rings that takes $\widetilde{x}_i\mapsto x_i$ ($i=1,2$) and takes $\widetilde{a}_{i,j}\mapsto a_{i,j}$ for all $i,j$. Then we have $\vp(\widetilde{y})=y$, where
\[
\widetilde{y} = \sum_{j=1}^N \widetilde{a}_{1,j} \widetilde{a}_{2,j} \dotsm \widetilde{a}_{(m-1),j} \in J^{m-1}.
\]
Now consider the reduction of $J$ modulo $J^{m+1}$. Applying Lemma \ref{l:auxiliary}, we see that
\[
\be(1+\widetilde{x}_1+\widetilde{x}_2,1+\widetilde{y})=1+[\widetilde{x}_1+\widetilde{x}_2,\widetilde{y}] \mod J^{m+1},
\]
\[
\be(1+\widetilde{x}_1,1+\widetilde{y})=1+[\widetilde{x}_1,\widetilde{y}] \mod J^{m+1} \quad\text{and}\quad \be(1+\widetilde{x}_2,1+\widetilde{y})=1+[\widetilde{x}_2,\widetilde{y}] \mod J^{m+1}.
\]

\mbr

Since $[\widetilde{x}_1+\widetilde{x}_2,\widetilde{y}],[\widetilde{x}_1,\widetilde{y}],[\widetilde{x}_2,\widetilde{y}]\in J^m$, it follows that
\[
\be(1+\widetilde{x}_1+\widetilde{x}_2,1+\widetilde{y}) = \be(1+\widetilde{x}_1,1+\widetilde{y}) \cdot \be(1+\widetilde{x}_2,1+\widetilde{y}) \mod J^{m+1},
\]
or equivalently
\[
\be(1+\widetilde{x}_1+\widetilde{x}_2,1+\widetilde{y}) \cdot \be(1+\widetilde{x}_1,1+\widetilde{y})^{-1} \cdot \be(1+\widetilde{x}_2,1+\widetilde{y})^{-1} \in 1+J^{m+1}.
\]
Since the last expression belongs to $(1+J,1+J)$ by definition, we see that it in fact belongs to $(1+J,1+J)\cap(1+J^{m+1})$. By virtue of Proposition \ref{p:halasi}, the last intersection is equal to $(1+J,1+J^m)$; thus
\[
\be(1+\widetilde{x}_1+\widetilde{x}_2,1+\widetilde{y}) \cdot \be(1+\widetilde{x}_1,1+\widetilde{y})^{-1} \cdot \be(1+\widetilde{x}_2,1+\widetilde{y})^{-1} \in (1+J,1+J^m).
\]
Applying the homomorphism $\vp:J\rar{}A$ to the last containment yields \eqref{e:1}.

\mbr

The proof of the second assertion of Proposition \ref{p:commutator-pairing}(b) is almost identical to the proof we just presented, so we will skip it.

\mbr

Finally, the proof of the third assertion is similar, but contains an additional idea. Again, the assertion amounts to the following statement: if $x\in A$, $y\in A^{m-1}$ and $\la\in\fk$, then
\begin{equation}\label{e:2}
\be(1+\la x,1+y)\cdot\be(1+x,1+\la y)^{-1} \in (1+A,1+A^m).
\end{equation}

\mbr

To prove this containment, we begin, as before, by writing $y=\sum_{j=1}^N a_{1,j} a_{2,j} \dotsm a_{(m-1),j}$ for suitable $a_{i,j}\in A$. Consider the  set of formal symbols
\[
X = \bigl\{ \widetilde{x} \bigr\} \cup \bigl\{ \widetilde{a}_{i,j} \bigr\}_{1\leq i\leq m-1,\ 1\leq j\leq N},
\]
and let $R=\bZ[\widetilde{\la}]$ be the polynomial ring over $\bZ$ in one variable $\widetilde{\la}$. There is a unique ring homomorphism $R\rar{}\fk$ such that $\widetilde{\la}\mapsto\la$; by means of this homomorphism we can view $\fk$, and hence also $A$, as $R$-algebras.

\mbr

Next let $J=Fr_R(n,X)$ be the free associative nilpotent algebra of nilpotence class $n$ generated by the set $X$ over $R$ (where $n$ is the nilpotence class of $A$) and let $\vp:J\rar{}A$ be the homomorphism of $R$-algebras determined by $\vp(\widetilde{x})=x$ and $\vp(\widetilde{a}_{i,j})=a_{i,j}$ for all $i,j$.

\mbr

From this point on the proof proceeds as before. Using Lemma \ref{l:auxiliary} we deduce that
\[
\be(1+\widetilde{\la}\widetilde{x},1+\widetilde{y}) = \be(1+\widetilde{x},1+\widetilde{\la}\widetilde{y}) \mod J^{m+1},
\]
which implies that
\[
\be(1+\widetilde{\la}\widetilde{x},1+\widetilde{y}) \cdot \be(1+\widetilde{x},1+\widetilde{\la}\widetilde{y})^{-1} \in (1+J,1+J)\cap(1+J^{m+1}) = (1+J,1+J^m),
\]
where the last equality follows from Proposition \ref{p:halasi}. Applying $\vp$ to the last containment yields \eqref{e:2}, which completes the proof of Proposition \ref{p:commutator-pairing}.

\section{Representations of totally disconnected groups}\label{s:smooth-induction}

This section is mostly a review of standard facts and definitions from the theory of smooth representations of locally compact totally disconnected topological groups. In \S\ref{ss:rodier} we reproduce some important results of F.~Rodier \cite{rodier}, which play a crucial role in Section \ref{s:proof} below.

\subsection{Representations of $\ell$-groups}
In this article we will follow the terminology of \cite{bz}. In particular, we make the following

\begin{defin}
An \emph{$\ell$-group}\footnote{Another common term is ``l.c.t.d. group,'' which stands for ``locally compact totally disconnected group.''} is a Hausdorff topological group $G$ such that the unit element $1\in G$ has a neighborhood basis consisting of compact open subgroups of $G$.
\end{defin}

A primary example of an $\ell$-group for us is the group $\bG(F)$ of $F$-points of an algebraic group $\bG$ over a local nonarchimedean field $F$. The topology on $\bG(F)$ is induced by the usual topology on $F$. An $\ell$-group of the form $\bG(F)$ is also second countable, which will be important for us.

\mbr

It is convenient to isolate a special class of $\ell$-groups that plays an important role in \cite{rodier} (on which our work relies heavily), but for which no term was introduced in \emph{op.~cit.}

\begin{defin}
An \emph{$\ell_c$-group} is an $\ell$-group $G$ that is a \emph{filtered} union of its compact open subgroups; in other words, every $g\in G$ is contained in a compact open subgroup of $G$, and any two such subgroups are together contained in a third such subgroup.
\end{defin}

For instance, if $\bG$ is a unipotent algebraic group over a local nonarchimedean field $F$, then $\bG(F)$ is an $\ell_c$-group. In particular, groups of the form $1+A$, where $A$ is a finite dimensional associative nilpotent algebra over $F$, are $\ell_c$-groups.

\mbr

For convenience, we recall that a \emph{smooth\footnote{The adjective ``algebraic'' is used in \cite{bz} in place of ``smooth.''} representation} of an $\ell$-group $G$ is a pair $(\pi,V)$ consisting of a complex vector space $V$ and a homomorphism $\pi:G\rar{}GL(V)$ with the property that for each $v\in V$, the stabilizer $G^v=\bigl\{g\in G\st \pi(g)v=v\bigr\}$ is open in $G$. By a standard abuse of notation we will sometimes denote a smooth representation $(\pi,V)$ by the single letter $\pi$ or $V$.

\mbr

A \emph{morphism} between smooth representations of $G$ is defined as a homomorphism of abstract representations, and the category of smooth representations of $G$ will be denoted by $\cR(G)$.

\mbr

A smooth representation $(\pi,V)$ of $G$ is said to be:

\begin{itemize}
\item \emph{irreducible} if $V\neq 0$ and the only $\pi(G)$-invariant subspaces of $V$ are $0$ and $V$;
\item \emph{admissible} if for every compact open subgroup $K\subset G$ the subspace \[V^K=\bigl\{v\in V \st \pi(g)v=v\ \forall\,g\in K\bigr\}\] is finite dimensional;
\item \emph{unitarizable} if $V$ has a positive definite Hermitian inner product invariant under $\pi(G)$.
\end{itemize}

By an argument of Jacquet (cf.~\cite[2.11]{bz}), Schur's lemma holds for second countable $\ell$-groups. More precisely, if $(\pi,V)$ is an irreducible smooth representation of a second countable $\ell$-group, then every linear operator $V\rar{}V$ that commutes with $\pi(G)$ is a scalar.

\mbr

In Theorem \ref{t:gutkin-unitary} we need to deal with unitary representations (which are typically not smooth) of topological groups. Let us recall that a \emph{unitary representation} of a topological group $G$ is a pair $(\pi,\cH)$ consisting of a Hilbert space $\cH$ over $\bC$ and a continuous homomorphism $\pi:G\rar{}U(\cH)$, where $U(\cH)$ is the group of unitary linear automorphisms of $\cH$ equipped with the strong operator topology. In this case $(\pi,\cH)$ is said to be \emph{irreducible} if $\cH\neq 0$ and the only \emph{closed} subspaces of $\cH$ invariant under $\pi(G)$ are $0$ and $\cH$.

\subsection{The smooth dual} If $(\pi,V)$ is an \emph{abstract} representation\footnote{That is, $\pi:G\rar{}GL(V)$ is a homomorphism of abstract groups.} of an $\ell$-group $G$, the subset $V^{sm}\subset V$ consisting of vectors $v\in V$ for which the stabilizer $G^v$ is open is a $G$-subrepresentation of $V$, which by definition is smooth. In fact, the functor $V\longmapsto V^{sm}$ from the category $\Rep(G)$ of abstract representations of $G$ to the category $\cR(G)$ is right adjoint to the inclusion functor $\cR(G)\into\Rep(G)$.

\mbr

If $V$ is a smooth representation of $G$, form the contragredient representation $\Hom_{\bC}(V,\bC)$ of $G$ as an abstract group. The smooth part $V^\vee:=\Hom_{\bC}(V,\bC)^{sm}$ is called the \emph{smooth dual} of $V$. We also write\footnote{The notation $(\widetilde{\pi},\widetilde{V})$ is used in \cite{bz} in place of $(\pi^\vee,V^\vee)$, but we prefer not to use the tilde symbol to avoid confusion with its other common uses.} $\pi^\vee:G\rar{}GL(V^\vee)$ for the corresponding homomorphism.

\mbr

The relation between the notion of admissibility and smooth duality is explained by the following standard (and easy) statement.

\begin{lem}\label{l:admissible-duality}
If $\pi$ is a smooth representation of an $\ell$-group $G$, then $\pi$ is admissible if and only if the canonical morphism $\pi\rar{}(\pi^\vee)^\vee$ is an isomorphism. In particular, if $\pi$ is a smooth irreducible admissible representation of $G$, then $\pi^\vee$ is also irreducible and admissible.
\end{lem}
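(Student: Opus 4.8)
The plan is to reduce everything to the behaviour of the spaces of $K$-invariants, where $K$ runs over the compact open subgroups of $G$. Recall first the standard structural fact: for a smooth representation $(\pi,V)$ and a compact open $K\subseteq G$, every vector has finite $K$-orbit, so averaging over $K$ (legitimate since $\bC$ has characteristic $0$) defines a projection $e_K\colon V\rar{}V$ onto $V^K$ whose kernel is spanned by the elements $v-\pi(k)v$ $(k\in K,\ v\in V)$, giving $V=V^K\oplus\ker e_K$; moreover $V=\bigcup_K V^K$. The key lemma I would isolate is that the tautological pairing $V\times V^\vee\rar{}\bC$ restricts, for each $K$, to a \emph{perfect} pairing between $V^K$ and $(V^\vee)^K$; equivalently, restriction of functionals gives a natural isomorphism $(V^\vee)^K\isom (V^K)^\ast$. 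For injectivity one notes that a $K$-invariant functional annihilates $\ker e_K$, hence is determined by its restriction to $V^K$; for surjectivity one extends an arbitrary functional on $V^K$ by zero on $\ker e_K$, obtaining a $K$-invariant, \emph{a fortiori} smooth, functional on $V$.

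Granting the key lemma, the first assertion is almost immediate. Admissibility of $\pi$ means $\dim_{\bC}V^K<\infty$ for every $K$, which by the lemma is equivalent to $\dim_{\bC}(V^\vee)^K<\infty$ for every $K$, i.e.\ to admissibility of $\pi^\vee$; applying this twice, admissibility of $\pi$ also yields admissibility of $(\pi^\vee)^\vee$. Moreover, a morphism of smooth representations of $G$ is an isomorphism if and only if it induces an isomorphism on $K$-invariants for every $K$, because both sides are the increasing unions of their $K$-invariant subspaces and the morphism respects them. Under the identifications $(V^\vee)^K\cong(V^K)^\ast$ and $\bigl((\pi^\vee)^\vee\bigr)^K\cong (V^K)^{\ast\ast}$ furnished by the key lemma (applied to $V$ and then to $V^\vee$), the $K$-invariant part of the canonical morphism $\pi\rar{}(\pi^\vee)^\vee$ is exactly the canonical biduality map $V^K\rar{}(V^K)^{\ast\ast}$ — a routine compatibility check — and the latter is an isomorphism precisely when $\dim_{\bC}V^K<\infty$. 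Hence $\pi\rar{}(\pi^\vee)^\vee$ is an isomorphism iff $\pi$ is admissible, which is the first assertion; the admissibility half of the second assertion was proved along the way.

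It remains to prove that $\pi^\vee$ is irreducible when $\pi$ is smooth, irreducible and admissible. Given a nonzero $G$-subrepresentation $W\subseteq V^\vee$, set $W^{\perp}=\{v\in V:\ \langle f,v\rangle=0\ \ \forall f\in W\}$. One checks that $W^\perp$ is a $G$-subrepresentation of $V$, and it is proper because $W\neq 0$; since $\pi$ is irreducible, $W^\perp=0$. For each $K$, the relation $\langle e_K f,v\rangle=\langle f,e_K v\rangle$ (valid since $G$, hence $K$, is unimodular) together with $e_K(W)=W^K$ shows that $W^\perp\cap V^K$ is precisely the annihilator $\Ann_{V^K}(W^K)$ of $W^K\subseteq (V^K)^\ast$ inside $V^K$. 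As $\dim_{\bC}V^K<\infty$, biduality recovers $W^K$ as the annihilator in $(V^K)^\ast=(V^\vee)^K$ of $W^\perp\cap V^K=0$; thus $W^K=(V^\vee)^K$ for every $K$, and therefore $W=\bigcup_K W^K=V^\vee$.

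The main obstacle is not conceptual but organizational: one must carefully set up the averaging operator $e_K$, verify the adjunction $\langle e_K f,v\rangle=\langle f,e_K v\rangle$ and the criterion that a morphism of smooth representations is an isomorphism iff it is so on all $K$-invariants, and confirm that the canonical biduality map is compatible with passage to $K$-invariants. Once these routine verifications are in place, the statement follows formally from finite-dimensional linear algebra applied level-by-level.
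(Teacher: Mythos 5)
The paper gives no proof of this lemma at all---it is stated without argument, preceded only by the remark that it is ``standard (and easy).'' Your proof is correct and is precisely the standard argument: the $K$-averaging idempotent $e_K$ yields $V=V^K\oplus\ker e_K$, restriction identifies $(V^\vee)^K$ with $(V^K)^\ast$, the $K$-invariant part of the biduality map $V\rar{}(V^\vee)^\vee$ is then the ordinary biduality $V^K\rar{}(V^K)^{\ast\ast}$, and the irreducibility of $\pi^\vee$ follows level-by-level from the finite-dimensional annihilator calculus. All the intermediate verifications you flagged (the adjunction $\langle e_Kf,v\rangle=\langle f,e_Kv\rangle$, $e_K(W)=W^K$ for a $G$-subrepresentation $W\subseteq V^\vee$, compatibility of the canonical map with passage to $K$-invariants) go through without incident; there is no gap.
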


\subsection{Induction functors}\label{ss:induction-functors} In this paper we use three induction functors for representations of $\ell$-groups. Let $G$ be an $\ell$-group, and let $H\subset G$ be a closed subgroup (in particular, $H$ is also an $\ell$-group).

\subsubsection{Smooth induction}\label{sss:smooth-induction}
Suppose $(\rho,W)$ is a smooth representation of $H$. We can form the induced representation of $\rho$ from $G$ to $H$ in the sense of abstract groups. By definition, its underlying vector space $V_{big}$ consists of all functions $f:G\rar{}W$ that satisfy
\[
f(gh) = \rho(h)\cdot f(g) \qquad\forall\,g\in G,\ h\in H,
\]
and the $G$ action on $V_{big}$ is given by left translation:
\[
(g\cdot f)(g') = f(g^{-1}\cdot g') \qquad\forall\, f\in V_{big}, \ g,g'\in G.
\]
The representation $V_{big}^{sm}$ of $G$ will be denoted by $\Ind_H^G\rho$ or $\Ind_H^G W$. Thus we obtain a functor
\[
\Ind_H^G : \cR(H) \rar{} \cR(G),
\]
called \emph{smooth induction}. Frobenius reciprocity follows formally: $\Ind_H^G$ is right adjoint to the restriction functor $\cR(G)\rar{}\cR(H)$.

\subsubsection{Compact induction}\label{sss:compact-induction}
In the setting of \S\ref{sss:smooth-induction}, the subspace of $V_{big}^{sm}$ consisting of functions $f\in V_{big}^{sm}$ whose support is compact modulo $H$ affords a smooth $G$-subrepresentation of $\Ind_H^G\rho$, which we denote by $\cInd_H^G\rho$. Thus we obtain a functor $\cInd_H^G:\cR(H)\rar{}\cR(G)$ together with a natural transformation $\cInd_H^G\into\Ind_H^G$. One calls $\cInd_H^G$ the functor of \emph{induction with compact supports} (or simply ``compact induction'').

\mbr

We caution the reader that in \cite{rodier}, the notation $\Ind_H^G$ is used for the functor of induction with compact supports, while no notation for the smooth induction functor is introduced.

\begin{rem}\label{r:induction-duality}
Suppose $G$ and $H$ are unimodular $\ell$-groups, which is the case in which we are mostly interested. $($For example, if $\bG$ and $\bH$ are unipotent algebraic groups over a local field $F$, then $\bG(F)$ and $\bH(F)$ are unimodular $\ell$-groups.$)$ Then there is a simple relationship between the functors $\cInd_H^G$ and $\Ind_H^G$. Namely, if $\rho$ is any smooth representation of $H$, there is a natural isomorphism of $G$-representations between $\bigl(\cInd_H^G\rho\bigr)^\vee$ and $\Ind_H^G(\rho^\vee)$.
\end{rem}

A more general statement, where the unimodularity of $G$ or $H$ is not assumed, is proved in \cite[Proposition 2.25(c)]{bz}.

\subsubsection{Unitary induction}\label{sss:unitary-induction} Here we only consider the unimodular case, since it is simpler and it is the only one relevant for us. Let $G$ be a unimodular $\ell$-group, and let $H\subset G$ be a closed unimodular subgroup. Then the coset space $G/H$ has a $G$-invariant measure, which is unique up to scaling. Fix one such measure $\mu$.

\mbr

Suppose $\rho:H\rar{}U(\cH)$ is a unitary representation of $H$, and let us denote the corresponding Hermitian inner product on $\cH$ by $\langle\cdot,\cdot\rangle$. Given a measurable function $f:G\rar{}\cH$ that satisfies $f(gh)=\rho(h)\cdot f(g)$ for all $g\in G$ and all $h\in H$, the function $\langle f,f\rangle : g\longmapsto\langle f(g),f(g)\rangle$ descends to the coset space $G/H$. One defines $\uInd_H^G\rho$ as the unitary representation of $G$ where:

\begin{itemize}
\item the underlying space consists of measurable functions $f:G\rar{}\cH$ such that $f(gh)=\rho(h)\cdot f(g)$ for all $g\in G$ and all $h\in H$ and such that $\int_{G/H} \langle f,f\rangle \,d\mu<\infty$;
\item the inner product is defined by
\[
(f_1,f_2) \longmapsto \int_{G/H} \langle f_1,f_2\rangle \,d\mu;
\]
\item the $G$-action is given by left translation, as before.
\end{itemize}

\subsection{Jacquet functors}
If $G$ is an $\ell$-group and $(\pi,V)$ is a smooth representation of $G$, we denote by $J_G(\pi)$ the quotient of $V$ by the subspace spanned by all elements of the form $\pi(g)\cdot v-v$, where $g\in G$ and $v\in V$. By ``abstract nonsense,'' $J_G$ is a right exact functor from $\cR(G)$ to the category of $\bC$-vector spaces. The following observation goes back to the works of Jacquet and Langlands.

\begin{lem}[see Prop.~2.35(b) in \cite{bz}]\label{l:jacquet-exact}
If $G$ is an $\ell_c$-group, the functor $J_G$ is exact.
\end{lem}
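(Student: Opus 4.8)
The functor $J_G$ is the functor of $G$-coinvariants: $J_G(\pi)=V/V(G)$, where $V(G)\subset V$ is the subspace spanned by all vectors of the form $\pi(g)v-v$ with $g\in G$ and $v\in V$. Being defined as a cokernel, $J_G$ is automatically right exact, so the only thing to prove is that $J_G$ preserves injections; equivalently, that for every subrepresentation $V'\subset V$ one has $V'\cap V(G)=V'(G)$. The plan is to reduce this to the case of a single compact open subgroup by means of a (filtered) colimit argument.

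First I would treat the compact case. Let $K\subset G$ be a compact open subgroup. Restriction from $G$ to $K$ is exact, so it suffices to show that $J_K\colon\cR(K)\to\vect$ is exact. For a smooth representation $(\pi,V)$ of $K$, every $v\in V$ is fixed by an open, hence finite-index, subgroup $K_v\subset K$, so the orbit $Kv$ is finite and the averaging operator
\[
e_K v=\frac{1}{[K:K_v]}\sum_{k\in K/K_v}\pi(k)v
\]
is well defined, independent of the choice of $K_v$, and $\bC$-linear. One checks in the usual way that $e_K$ is idempotent with image $V^K$, and that its kernel is exactly $V(K)$ (indeed $\pi(k)v-v\in\ker e_K$ for all $k$, and conversely if $e_Kv=0$ then $v=v-e_Kv=\frac{1}{[K:K_v]}\sum_k\bigl(v-\pi(k)v\bigr)\in V(K)$). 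Hence $V=V^K\oplus V(K)$, so the natural map $V^K\to J_K(V)$ is an isomorphism, and since invariants are left exact while surjectivity on $K$-invariants follows by applying $e_K$ to a lift, the functor $V\mapsto V^K$ — and therefore $J_K$ — is exact.

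Now I would invoke the hypothesis that $G$ is an $\ell_c$-group, i.e. a \emph{filtered} union of its compact open subgroups. For a smooth representation $V$ of $G$, each $g\in G$ lies in some compact open $K$, so $\pi(g)v-v\in V(K)$; and by filteredness the subspaces $V(K)$, as $K$ ranges over the compact open subgroups of $G$ ordered by inclusion, form a directed system with union $V(G)$. Consequently, naturally in $V$,
\[
J_G(V)=V/V(G)=\varinjlim_K\bigl(V/V(K)\bigr)=\varinjlim_K J_K(V).
\]
Since each $J_K$ is exact by the previous step and filtered colimits of $\bC$-vector spaces are exact, it follows that $J_G=\varinjlim_K J_K$ is exact. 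The only mildly delicate point — and the step I would write out most carefully — is the identification $J_G(V)\cong\varinjlim_K J_K(V)$: one must use filteredness of the family of compact open subgroups to know that $V(G)=\bigcup_K V(K)$ is genuinely a subspace and that the colimit is over a directed index set, so that exactness of filtered colimits applies. Everything else is the standard averaging argument, valid here because we work over $\bC$ and smoothness forces finite $K$-orbits. (This is Proposition 2.35(b) of \cite{bz}, and the argument above is in essence the one given there.)
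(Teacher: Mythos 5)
Your proof is correct, and it is essentially the argument of Bernstein--Zelevinsky (Proposition 2.35(b) in \cite{bz}), which the paper simply cites without reproducing. The two ingredients you isolate — the averaging idempotent $e_K$ giving $V=V^K\oplus V(K)$ for compact open $K$, hence exactness of $J_K$, and then the identification $J_G=\varinjlim_K J_K$ over the filtered family of compact open subgroups — are exactly the standard ones, and you correctly flag that filteredness of the family is what makes $V(G)=\bigcup_K V(K)$ a directed union and allows the colimit argument to go through.
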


\subsection{Representations of $\ell_c$-groups}
Lemma \ref{l:jacquet-exact} has important consequences for the representation theory of $\ell_c$-groups. Special cases of the following result have been used in the literature, but we include a proof for convenience.

\begin{prop}\label{p:dual-injective}
Let $G$ be an $\ell_c$-group. For any $\pi\in\cR(G)$, the smooth dual $\pi^\vee$ is an injective object in the category $\cR(G)$.
\end{prop}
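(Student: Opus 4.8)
The plan is to show that $\Hom_{\cR(G)}(-,\pi^\vee)$ is an exact functor on $\cR(G)$. Since $\cR(G)$ is an abelian category and $\Hom_{\cR(G)}(-,\pi^\vee)$ is always left exact, it suffices to prove that it carries injections to surjections. The key is the adjunction relating $\Hom$ into a smooth dual with the Jacquet functor: for any smooth representations $\sigma$ of $G$ and any smooth $\pi$, there is a natural isomorphism
\[
\Hom_{\cR(G)}(\sigma,\pi^\vee) \;\cong\; \Hom_{\bC}\bigl( J_G(\sigma\otimes\pi),\bC\bigr).
\]
Indeed, a $G$-morphism $\sigma\to\pi^\vee=\Hom_{\bC}(\pi,\bC)^{sm}$ is the same as a $G$-invariant bilinear pairing $\sigma\otimes\pi\to\bC$ (the smoothness condition on the target is automatic once one lands in the space of $G$-smooth functionals, and it is harmless to drop it since $\bC$ carries the trivial $G$-action), which in turn is the same as a linear functional on the space of coinvariants $J_G(\sigma\otimes\pi)$.

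First I would set up this natural isomorphism carefully, checking that it is functorial in $\sigma$ (contravariantly) and that it identifies the restriction map $\Hom_{\cR(G)}(\sigma,\pi^\vee)\to\Hom_{\cR(G)}(\sigma',\pi^\vee)$ induced by an inclusion $\sigma'\hookrightarrow\sigma$ with the dual of the map $J_G(\sigma'\otimes\pi)\to J_G(\sigma\otimes\pi)$ induced by $\sigma'\otimes\pi\hookrightarrow\sigma\otimes\pi$ (tensoring with $\pi$ over $\bC$ is exact). Next, given an injection $\sigma'\hookrightarrow\sigma$ in $\cR(G)$, the sequence $0\to\sigma'\otimes\pi\to\sigma\otimes\pi$ is exact in $\cR(G)$, so by Lemma \ref{l:jacquet-exact} — using that $G$ is an $\ell_c$-group — the map $J_G(\sigma'\otimes\pi)\to J_G(\sigma\otimes\pi)$ is injective. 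Applying the (exact) functor $\Hom_{\bC}(-,\bC)$ on vector spaces then shows that $\Hom_{\bC}(J_G(\sigma\otimes\pi),\bC)\to\Hom_{\bC}(J_G(\sigma'\otimes\pi),\bC)$ is surjective. Transporting this back along the natural isomorphism above gives that $\Hom_{\cR(G)}(\sigma,\pi^\vee)\to\Hom_{\cR(G)}(\sigma',\pi^\vee)$ is surjective, i.e.\ every $G$-morphism $\sigma'\to\pi^\vee$ extends to $\sigma$. This is exactly the statement that $\pi^\vee$ is injective in $\cR(G)$.

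The main obstacle I anticipate is verifying the naturality and the smoothness bookkeeping in the adjunction isomorphism $\Hom_{\cR(G)}(\sigma,\pi^\vee)\cong J_G(\sigma\otimes\pi)^*$: one must be careful that a $G$-equivariant map $\sigma\to\Hom_{\bC}(\pi,\bC)$ automatically has image in the smooth part (because every vector of $\sigma$ has an open stabilizer, which then fixes the image functional), so that passing to $\pi^\vee$ loses nothing, and conversely that an arbitrary invariant pairing $\sigma\otimes\pi\to\bC$ produces a map into $\pi^\vee$ rather than merely into the full algebraic dual. Once this is pinned down, the rest is a formal consequence of the exactness of $J_G$ and of $-\otimes_{\bC}\pi$. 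I would also remark that nothing here requires $\pi$ to be admissible or irreducible; the statement holds for arbitrary $\pi\in\cR(G)$, which is what makes it a useful tool in Section \ref{s:proof}.
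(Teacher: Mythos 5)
Your proof is correct and takes essentially the same route as the paper: both rest on the natural isomorphism $\Hom_{\cR(G)}(\sigma,\pi^\vee)\cong\Hom_{\bC}(J_G(\sigma\otimes_{\bC}\pi),\bC)$ together with the exactness of $J_G$ (Lemma \ref{l:jacquet-exact}), of $-\otimes_{\bC}\pi$, and of $\Hom_{\bC}(-,\bC)$. The paper simply phrases the conclusion as ``the right-hand side is a composite of three exact functors in $\rho$,'' while you unwind this to the statement that $\Hom_{\cR(G)}(-,\pi^\vee)$ sends injections to surjections; these are equivalent.
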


\begin{proof}
For any $\rho\in\cR(G)$ we have a chain of natural isomorphisms
\[
\Hom_{\cR(G)}(\rho,\pi^\vee) \cong \Hom_{\cR(G)}(\rho\tens_{\bC}\pi,\bC) \cong \Hom_{\bC}\bigl(J_G(\rho\tens_{\bC}\pi),\bC\bigr),
\]
and by Lemma \ref{l:jacquet-exact}, the right hand side is a composition of three exact functors in $\rho$.
\end{proof}

\begin{cor}\label{c:admissible-injective}
If $G$ is an $\ell_c$-group and $\pi\in\cR(G)$ is admissible, then $\pi$ is injective in $\cR(G)$.
\end{cor}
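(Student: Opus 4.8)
The plan is to deduce Corollary \ref{c:admissible-injective} directly from Lemma \ref{l:admissible-duality} together with Proposition \ref{p:dual-injective}. The only subtlety is that Lemma \ref{l:admissible-duality} is stated for an arbitrary $\ell$-group, so it already applies here; and Proposition \ref{p:dual-injective} is stated for $\ell_c$-groups, which is exactly our hypothesis. So the argument is essentially a one-liner once those two results are in hand.

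More precisely, suppose $G$ is an $\ell_c$-group and $\pi\in\cR(G)$ is admissible. By Lemma \ref{l:admissible-duality}, admissibility of $\pi$ implies that the canonical morphism $\pi\rar{}(\pi^\vee)^\vee$ is an isomorphism in $\cR(G)$. Hence $\pi$ is isomorphic to the smooth dual of the representation $\pi^\vee\in\cR(G)$. Now apply Proposition \ref{p:dual-injective} with $\pi^\vee$ in place of the input representation: since $G$ is an $\ell_c$-group, the smooth dual $(\pi^\vee)^\vee$ is an injective object of $\cR(G)$. An object isomorphic to an injective object is injective, so $\pi$ itself is injective in $\cR(G)$.

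Strictly speaking there is nothing here that qualifies as a ``main obstacle''; the real work has already been done in establishing Lemma \ref{l:admissible-duality} (standard, and cited as such) and Proposition \ref{p:dual-injective} (whose proof uses the exactness of the Jacquet functor $J_G$ for $\ell_c$-groups, i.e.\ Lemma \ref{l:jacquet-exact}). If one wanted to be completely self-contained and not invoke Lemma \ref{l:admissible-duality} as a black box, one would need to recall why admissibility forces $\pi\cong(\pi^\vee)^\vee$: this comes down to the fact that on the $K$-fixed vectors, for each compact open $K\subset G$, the pairing between $V^K$ and $(V^\vee)^K$ is perfect precisely when $V^K$ is finite dimensional, and then one checks compatibility as $K$ shrinks. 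But since Lemma \ref{l:admissible-duality} is already available in the text, I would simply cite it and keep the proof of the corollary to the two-sentence deduction above.
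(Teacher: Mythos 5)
Your argument is exactly the paper's proof, merely spelled out: the paper's one-line proof reads ``Combine Lemma \ref{l:admissible-duality} with Proposition \ref{p:dual-injective},'' and your deduction — $\pi\cong(\pi^\vee)^\vee$ by admissibility, then $(\pi^\vee)^\vee$ is injective by Proposition \ref{p:dual-injective} applied to $\pi^\vee$ — is precisely what that combination means. Correct and identical in approach.
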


\begin{proof}
Combine Lemma \ref{l:admissible-duality} with Proposition \ref{p:dual-injective}.
\end{proof}

\begin{cor}\label{c:irreducible-quotient}
If $G$ is an $\ell_c$-group such that every smooth irreducible representation of $G$ is admissible, then every nonzero smooth representation of $G$ has an irreducible quotient.
\end{cor}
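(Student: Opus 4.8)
The plan is to reduce the existence of an irreducible quotient to the injectivity of admissible representations established in Corollary \ref{c:admissible-injective}. Let $G$ be an $\ell_c$-group in which every smooth irreducible representation is admissible, and let $(\pi,V)$ be a nonzero smooth representation of $G$. First I would use a Zorn's lemma argument: since $V\neq 0$, pick $0\neq v\in V$ and consider the (nonzero) cyclic subrepresentation it generates; replacing $V$ by this subrepresentation, we may assume $V$ is finitely (in fact cyclically) generated. Then the set of proper subrepresentations of $V$ is nonempty (it contains $0$) and, because $V$ is finitely generated, the union of any chain of proper subrepresentations is again proper; hence Zorn's lemma furnishes a maximal proper subrepresentation $W\subsetneq V$. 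The quotient $V/W$ is then a nonzero smooth representation with no proper nonzero subrepresentations, i.e.\ it is irreducible.

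That already produces an irreducible \emph{sub}quotient, but the statement asks for an irreducible \emph{quotient} of the original $V$, so the point is to promote the surjection $V\onto V/W$ without first passing to a cyclic subrepresentation. Here is where admissibility enters. By hypothesis the irreducible quotient $\sigma:=V/W$ of the cyclic subrepresentation $V_0=\langle v\rangle$ is admissible, hence by Corollary \ref{c:admissible-injective} it is an injective object of $\cR(G)$. Now apply this: the inclusion $V_0\into V$ is a monomorphism in $\cR(G)$, and we have the quotient map $V_0\onto\sigma$; injectivity of $\sigma$ lets us extend this map along $V_0\into V$ to a morphism $\Phi:V\to\sigma$. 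Since $\Phi$ is nonzero (it is nonzero already on $V_0$) and $\sigma$ is irreducible, $\Phi$ is surjective. Thus $\sigma$ is realized as an irreducible quotient of $V$, completing the proof.

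I would present this essentially in the two steps above: (i) a Zorn's lemma argument producing an irreducible quotient of a cyclic subrepresentation, and (ii) the injectivity-of-admissibles argument extending that quotient map to all of $V$. The only mild subtlety — and the step I would be most careful about — is the finite-generation input to Zorn's lemma in step (i): one must check that a cyclic smooth representation is the filtered union of its proper subrepresentations is impossible, equivalently that the generator cannot lie in such a union, which is immediate since it would then lie in one member of the chain, forcing that member to be all of $V_0$. Everything else is formal: the extension in step (ii) is just the defining property of an injective object applied to the monomorphism $V_0\hookrightarrow V$, and surjectivity of the extension is forced by irreducibility of the target together with nonvanishing on $V_0$.
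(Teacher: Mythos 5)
Your proof is correct and follows essentially the same route as the paper: reduce to a cyclic subrepresentation, use Zorn's lemma to extract an irreducible quotient of it, then invoke the admissibility hypothesis together with Corollary \ref{c:admissible-injective} to see that this irreducible representation is injective, so the quotient map extends along the inclusion into $V$ and is forced to be surjective.
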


\begin{proof}
By Zorn's lemma, if $\rho\in\cR(G)$ is finitely generated and $\rho\neq 0$, then $\rho$ has an irreducible quotient (no assumptions on $G$ are required here). Hence if $\rho\in\cR(G)$ is arbitrary with $\rho\neq 0$, then $\rho$ has an irreducible \emph{sub}quotient. But in our situation, all smooth irreducible representations of $G$ are injective by Corollary \ref{c:admissible-injective}, so we see that $\rho$ has an irreducible quotient.
\end{proof}

\subsection{The Pontryagin dual of an $\ell_c$-group}
Let $U$ be an $\ell_c$-group. If $U$ is merely viewed as a topological group, one knows how to define its Pontryagin dual $U^*$; it is a commutative topological group which is canonically identified with the Pontryagin dual of the abelianization $U^{ab}=U\bigl/\overline{(U,U)}$ (here $\overline{(U,U)}$ is the closure of the group-theoretic commutator subgroup of $U$).

\mbr

However, for $\ell_c$-groups the Pontryagin dual construction has some special properties:

\begin{lem}\label{l:pontryagin}
If $U$ is an $\ell_c$-group, then every continuous homomorphism $\chi:U\rar{}\cst$ takes values in the unit circle $S^1\subset\cst$ and has open kernel. The Pontryagin dual $U^*$, with its standard compact-open topology, is also an $\ell_c$-group.
\end{lem}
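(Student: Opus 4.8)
The plan is to exploit the defining property of an $\ell_c$-group: every element lies in a compact open subgroup, and the collection of such subgroups is filtered. First I would dispose of the claim about values in $S^1$. Let $\chi:U\rar{}\cst$ be a continuous homomorphism. Given $u\in U$, pick a compact open subgroup $K$ containing $u$. Then $\chi(K)$ is a compact subgroup of $\cst$, hence bounded away from $0$ and $\infty$; but a bounded-below, bounded-above subgroup of $\cst$ must lie in $S^1$ (otherwise $\abs{\chi(k)}^n\to 0$ or $\infty$ for $k\in K$ with $\abs{\chi(k)}\neq 1$). Since $u$ was arbitrary, $\chi(U)\subset S^1$. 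For the open kernel, note that $\chi(K)$ is a compact subgroup of $S^1$; since $S^1$ has no small subgroups, there is an open neighborhood $V$ of $1$ in $\cst$ with $V\cap\chi(K)=\{1\}$. Then $\chi^{-1}(V)\cap K$ is open in $U$ and maps to $\{1\}$, i.e.\ it is an open subgroup of $\Ker\chi$, so $\Ker\chi$ is open. (Equivalently: $\chi$ restricted to the compact open $K$ is a continuous character, hence factors through a finite quotient of $K$, and its kernel in $K$ is open in $U$.)

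Next I would show $U^*$ is an $\ell_c$-group. The abelianization point is a convenience: $U^*\cong (U^{ab})^*$ canonically, but one must be slightly careful because $U^{ab}$ need not itself be an $\ell_c$-group in the naive sense; still, every element of $U^{ab}$ has a preimage in some compact open $K\subset U$, and one can argue with $U$ directly. Write $U=\bigcup_\alpha K_\alpha$ as a filtered union of compact open subgroups. Dualizing the open inclusions $K_\alpha\hookrightarrow U$ and using that each $K_\alpha$ is both open and compact, one gets, by Pontryagin duality for locally compact abelian groups (applied to $U^{ab}$), a presentation $U^* = \limproj_\alpha K_\alpha^*$ where each $K_\alpha^*$ is the Pontryagin dual of a profinite group, hence a discrete torsion abelian group. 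Dually, the annihilators $(U/K_\alpha)^{\widehat{\ }}=\{\chi\in U^*\st \chi|_{K_\alpha}=1\}$ form a neighborhood basis of $1$ in $U^*$ for the compact-open topology (because $\{K_\alpha\}$ is cofinal among compact sets, since every compact subset of $U$ lies in some $K_\alpha$ by the filtered-union hypothesis together with compactness), and each such annihilator is exactly $(U/K_\alpha^{\mathrm{cl}})^*$ — a compact group, being the Pontryagin dual of a discrete group. Moreover each annihilator is open in $U^*$, since it is the kernel of the (continuous) restriction map $U^*\to K_\alpha^*$ to a discrete group. Thus $U^*$ has a neighborhood basis at $1$ consisting of compact open subgroups, i.e.\ it is an $\ell$-group. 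Finally, to see it is an $\ell_c$-group: any $\chi\in U^*$ is, by the first part, trivial on some compact open $K_\alpha$ (its kernel is open, hence contains some $K_\alpha$ after shrinking, using that the $K_\alpha$ form a neighborhood basis of $1$ — which follows because $U$ is an $\ell$-group and any open subgroup contains a compact open one from the given family up to intersection), so $\chi$ lies in the compact open subgroup $(U/K_\alpha)^*\subset U^*$; and the filtered-union property for $U^*$ follows from the cofiltered-limit structure, since the intersection $(U/K_\alpha)^*\cap (U/K_\beta)^*=(U/(K_\alpha\cap K_\beta))^*$ — wait, rather $(U/\langle K_\alpha,K_\beta\rangle)^*$ — is again of this form and contains both, using that $\{K_\alpha\}$ is filtered.

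The main obstacle I anticipate is bookkeeping with the two dualities and making the cofinality/compatibility claims precise: specifically, that compact subsets of an $\ell_c$-group $U$ are cofinal in (indeed contained in) the family $\{K_\alpha\}$ of compact open subgroups, so that the compact-open topology on $U^*$ is genuinely the ``$K_\alpha$-adic'' topology; and that the passage through $U^{ab}$ does not lose the $\ell_c$-structure. Neither is deep — the first is immediate from a compact set being covered by finitely many cosets of any $K_\alpha$ hence contained in the $K_\alpha$-span, which by the filtered hypothesis lies in a single $K_\gamma$ — but it is the kind of point one must state carefully. Everything else (no small subgroups in $S^1$, characters of profinite groups have finite image, Pontryagin duality exchanging discrete and compact) is standard.
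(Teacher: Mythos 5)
Your proof is correct and follows essentially the same approach as the paper: compact subgroups of $\cst$ lie in $S^1$, no-small-subgroups gives the open kernel, the annihilators $K^\perp$ (dual to the discrete $U/K$ after abelianizing) are compact open and form a neighborhood basis because every compact subset of the $\ell_c$-group $U$ lies in some $K_\alpha$, and they exhaust $U^*$. The only blemish is the final ``wait, rather'' sentence, which conflates intersection with containment: the filtered property of $\{K^\perp\}$ comes from $K_\alpha^\perp\cup K_\beta^\perp\subseteq(K_\alpha\cap K_\beta)^\perp$ (the annihilator of the \emph{intersection}), not from the intersection of annihilators --- but that is a one-line fix, not a gap.
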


Because of this lemma, in what follows, if $U$ is an $\ell_c$-group we will think of elements of the Pontryagin dual $U^*$ as homomorphisms $\chi:U\rar{}\cst$ whose kernel is open in $U$, and we will refer to them as \emph{smooth homomorphisms}.

\begin{proof}
If $\chi:U\rar{}\cst$ is a continuous homomorphism, then $\chi(U)$ is equal to the union of its compact subgroups, whence $\chi(U)\subset S^1$. Moreover, $\chi$ has open kernel since a sufficiently small open neighborhood of $1\in\cst$ contains no subgroups of $\cst$ other than $\{1\}$. For the last statement we may assume, after replacing $U$ with $U^{ab}$, that $U$ is commutative. Given any compact open subgroup $K\subset U$, let $K^\perp$ denote its annihilator in $U^*$. Then $K^\perp$ is a compact open subgroup of $U^*$ (it can be identified with the Pontryagin dual of the discrete quotient $U/K$). Subgroups of the form $K^\perp$ form a basis of neighborhoods of $1\in U^*$ (here the assumption that $U$ is a \emph{filtered} union of its compact open subgroups becomes crucial), and they also exhaust all of $U^*$ as $K$ gets smaller and smaller, by the previous remarks. This shows that $U^*$ is an $\ell_c$-group.
\end{proof}

\subsection{Summary of some results of \cite{rodier}}\label{ss:rodier}
We conclude this section by stating some of the results appearing in \cite{rodier} that are used in \S\ref{s:proof}.
Until the end of the section we fix a second countable $\ell$-group $G$ and a normal closed subgroup $U\subset G$ that is an $\ell_c$-group. The action of $G$ on $U$ by conjugation induces an action of $G$ on $U^*$ by topological group automorphisms.

\mbr

If $(\rho,W)$ is a smooth representation of $U$ and $\chi:U\rar{}\cst$ is a smooth homomorphism, then, following Rodier, we write $\widetilde{W}(\chi)$ or $\widetilde{\rho}(\chi)$ for $J_U(\rho\tens_{\bC}\chi^{-1})$; more concretely, $\widetilde{\rho}(\chi)=\widetilde{W}(\chi)$ is the quotient of $W$ by the subspace spanned by all elements of the form $\rho(u)\cdot w-\chi(u)\cdot w$, where $u\in U$ and $w\in W$. If $S\subset U^*$ is a subset, we say that $(\rho,W)$ has \emph{spectral support contained in $S$} provided the kernel of the natural map
\[
W \rar{} \prod_{\chi\in S} \widetilde{W}(\chi)
\]
is injective. If this holds, then, in particular, $\rho(U)$ is commutative.

\begin{rem}
Rodier assumes in \cite{rodier} that the subgroup $U$ itself is abelian. However, some of his theorems can be extended to the case where $U$ is not necessarily commutative simply by replacing $G$ with $G/\overline{(U,U)}$ and $U$ with $U/\overline{(U,U)}$. As this generalization is important for the applications we have in mind, we will use it to formulate the theorems below.
\end{rem}

Note that if $(\pi,V)$ is a smooth representation of $G$ and $\chi\in U^*$, then $\widetilde{V}(\chi)$ inherits a natural action of the stabilizer $Z_G(\chi)$ of $\chi$ in $G$.

\begin{thm}\label{t:rodier3}
Let $\chi\in U^*$ be such that the orbit $G\cdot\chi$ is locally closed in $U^*$. The functor $\pi\longmapsto\widetilde{\pi}(\chi)$ is an equivalence between the category of smooth representations of $G$ whose restriction to $U$ has spectral support contained in $G\cdot\chi$, and the category of smooth representations of $Z_G(\chi)$ on which $U$ acts via the scalar $\chi$. A quasi-inverse functor is given by $\cInd_{Z_G(\chi)}^G$.
\end{thm}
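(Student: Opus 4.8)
The plan is to reduce Theorem \ref{t:rodier3} to Rodier's original theorem for the \emph{abelian} case by the device already announced in the Remark preceding the statement. First I would replace $G$ by $\overline{G}=G/\overline{(U,U)}$ and $U$ by $\overline{U}=U/\overline{(U,U)}$. Since $\overline{(U,U)}$ is a normal closed subgroup of $G$ (being characteristic in $U$ and $U$ normal in $G$), $\overline{G}$ is a second countable $\ell$-group and $\overline{U}$ is a normal closed abelian subgroup which is still an $\ell_c$-group (a continuous image of an $\ell_c$-group under a quotient map, or more simply: $\overline{U}$ is an abelian Hausdorff locally compact group that is the filtered union of the images of the compact open subgroups of $U$). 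The Pontryagin dual $\overline{U}^*$ is canonically identified with $U^*$ as a $G$-space, because every smooth homomorphism $\chi:U\to\cst$ kills $\overline{(U,U)}$ by Lemma \ref{l:pontryagin} and functoriality. Moreover a smooth representation of $G$ whose restriction to $U$ has spectral support in $G\cdot\chi$ automatically factors through $\overline{G}$: having spectral support in a set $S\subset U^*$ forces, as noted in the text, $\rho(U)$ to be commutative, hence $\rho$ trivial on $\overline{(U,U)}$; and then the whole $G$-representation is trivial on $\overline{(U,U)}$ since the latter is generated by commutators from $U$. So the category in the statement for $(G,U)$ is literally the same as the corresponding category for $(\overline{G},\overline{U})$, and likewise $Z_G(\chi)$ maps onto $Z_{\overline G}(\chi)$ with kernel $\overline{(U,U)}$, identifying the two ``$U$ acts by the scalar $\chi$'' categories. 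The functor $\pi\mapsto\widetilde\pi(\chi)$ and the functor $\cInd$ are manifestly compatible with these identifications (the coset spaces $G/Z_G(\chi)$ and $\overline G/Z_{\overline G}(\chi)$ agree).

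Once this reduction is in place, the statement is exactly Rodier's theorem for an abelian normal $\ell_c$-subgroup, and I would simply cite the relevant result of \cite{rodier}. Concretely, the content there is: (i) for $\pi$ with spectral support in the \emph{locally closed} orbit $G\cdot\chi$, the natural evaluation/restriction map exhibits $\pi$ as $\cInd_{Z_G(\chi)}^G$ of $\widetilde\pi(\chi)$, using that local closedness of the orbit makes the quotient topology on $G\cdot\chi\cong G/Z_G(\chi)$ well behaved and lets one glue the ``isotypic along $\chi'$'' pieces over the orbit; and (ii) conversely, for any smooth $Z_G(\chi)$-representation $\sigma$ on which $U$ acts by $\chi$, the representation $\cInd_{Z_G(\chi)}^G\sigma$ has restriction to $U$ with spectral support in $G\cdot\chi$ and $\widetilde{(\cInd\,\sigma)}(\chi)\cong\sigma$ naturally. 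That these two constructions are mutually quasi-inverse, and in particular are functorial equivalences, is Rodier's theorem; second countability of $G$ is what guarantees the measure-theoretic/Baire-category arguments behind the local-closedness hypothesis go through, and it is built into our standing assumptions.

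The main obstacle is not really a mathematical gap but a bookkeeping one: making sure the non-commutative generalization announced in the Remark is carried out cleanly, i.e. that every clause of the theorem (the description of both categories, the functor $\widetilde\pi(\chi)$, the claim that $\cInd$ is quasi-inverse, and the local-closedness hypothesis on the orbit) is preserved under the passage $(G,U)\rightsquigarrow(\overline G,\overline U)$. The one point deserving genuine care is the assertion that \emph{every} object of the source category is trivial on $\overline{(U,U)}$ — this needs the observation, already recorded in the excerpt, that ``spectral support contained in $S$'' implies $\rho(U)$ is commutative, so there is really nothing new to prove, only to invoke. I would therefore write the proof as: (1) reduce to $U$ abelian via the remark, checking the identifications above; (2) verify the orbit $G\cdot\chi\subset U^*$ is still locally closed after the reduction (it is the same subset of the same space); (3) cite the abelian case of \cite{rodier} for the equivalence and for the quasi-inverse $\cInd$. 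No separate lemma is needed.
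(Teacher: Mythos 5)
Your proposal is correct and follows exactly the same route as the paper: the paper's proof consists precisely of the sentence "This reduces at once to the case where $U$ is abelian, which follows from [Rodier, Thm.~3, p.~186] and the remark on pp.~187--188," and your writeup simply fleshes out the bookkeeping behind that reduction (passing to $G/\overline{(U,U)}$ and $U/\overline{(U,U)}$, identifying the two source and target categories, and noting the functors are compatible). No differences of substance.
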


This reduces at once to the case where $U$ is abelian, which follows from \cite[Thm.~3, p.~186]{rodier} and the remark on pp.~187--188 of \emph{op.~cit.}

\begin{thm}\label{t:rodier4}
Let $(\pi,V)\in\cR(G)$, and let $\chi\in U^*$ be such that its orbit $G\cdot\chi$ is \emph{closed} in $U^*$ and $\pi\bigl\lvert_U$ has spectral support contained in $G\cdot\chi$. If $\widetilde{\pi}(\chi)$ is admissible as a representation of $Z_G(\chi)$, then $\pi$ is admissible as a representation of $G$.
\end{thm}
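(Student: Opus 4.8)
\emph{Proof plan.} The idea is to use Theorem \ref{t:rodier3} to present $\pi$ as a compactly induced representation and then check admissibility by hand, the closedness of the orbit $G\cdot\chi$ entering through a single compactness argument. First I would reduce to the case where $U$ is abelian, exactly as in the remark preceding Theorem \ref{t:rodier3}: since $\pi\bigl\lvert_U$ having spectral support in $G\cdot\chi$ forces $\pi(U)$ to be commutative, $\pi$ factors through $G/\overline{(U,U)}$, and replacing $G$ by $G/\overline{(U,U)}$ and $U$ by $U^{ab}$ changes neither the hypotheses nor the conclusion (the objects $Z_G(\chi)$, $U^*$, $\widetilde\pi(\chi)$ and the functor $\cInd$ all descend). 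Since $G\cdot\chi$ is closed it is locally closed, so Theorem \ref{t:rodier3} applies and furnishes an isomorphism $\pi\cong\cInd_H^G W$ with $H=Z_G(\chi)$ and $W=\widetilde\pi(\chi)$, where the subgroup $U\subseteq H$ (an abelian $U$ acts trivially on $U^*$) acts on $W$ through the scalar character $\chi$, and where $W$ is admissible over $H$ by hypothesis. It then suffices to prove that $\dim_{\bC}V^\Omega<\infty$ for every compact open subgroup $\Omega\subseteq G$, $V$ being the space underlying $\cInd_H^G W$.

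Next I would realize $V$ concretely as the space of locally constant functions $f\colon G\to W$ with $f(gh)=\sigma(h)f(g)$ for $h\in H$ and with support compact modulo $H$. A function $f\in V^\Omega$ is left $\Omega$-invariant, hence determined by its values on a set of representatives of $\Omega\backslash G/H$; and for each representative $g$ and each $h\in g^{-1}\Omega g\cap H$ one has $\sigma(h)f(g)=f(gh)=f\bigl((ghg^{-1})g\bigr)=f(g)$, so that $f(g)\in W^{\sigma(g^{-1}\Omega g\cap H)}$, which is finite-dimensional by admissibility of $W$ because $g^{-1}\Omega g\cap H$ is a compact open subgroup of $H$. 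Since the support of $f$ is a union of open double cosets and is compact modulo $H$, it meets only finitely many of them, so $f\mapsto\bigl(f(g)\bigr)_g$ embeds $V^\Omega$ into $\bigoplus_{\Omega\backslash G/H}W^{\sigma(g^{-1}\Omega g\cap H)}$. The proof thus reduces to showing that $W^{\sigma(g^{-1}\Omega g\cap H)}\neq 0$ for only finitely many double cosets $\Omega g H$, and this finiteness is the step I expect to be the main obstacle.

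To prove it, put $K=\Omega\cap U$, a compact open subgroup of $U$. Then $g^{-1}Kg\subseteq g^{-1}\Omega g\cap H$, and since $\sigma\bigl\lvert_U=\chi\cdot\id_W$ we have $W^{\sigma(g^{-1}\Omega g\cap H)}\subseteq W^{\sigma(g^{-1}Kg)}$, which vanishes unless $g\cdot\chi$ lies in the annihilator $K^\perp\subseteq U^*$ (indeed $\sigma(g^{-1}vg)=(g\cdot\chi)(v)\cdot\id_W$ for $v\in K$). By the proof of Lemma \ref{l:pontryagin}, $K^\perp$ is a compact open subgroup of $U^*$, hence $Z:=(G\cdot\chi)\cap K^\perp$ is compact, being the intersection of the closed set $G\cdot\chi$ with a compact set. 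Now $Z$ is stable under $\Omega$ (which normalizes $K$, so preserves $K^\perp$, and preserves $G\cdot\chi$), and is therefore a union of $\Omega$-orbits on $G\cdot\chi$; each such orbit is open in $G\cdot\chi$, because for a continuous action of a second countable locally compact group the orbit map onto a locally closed orbit is open (the same fact that underlies Theorem \ref{t:rodier3}). A compact space is not an infinite disjoint union of nonempty open subsets, so only finitely many $\Omega$-orbits, equivalently only finitely many double cosets $\Omega g H$, satisfy $g\cdot\chi\in K^\perp$. Combined with the previous paragraph, $V^\Omega$ embeds into a finite direct sum of finite-dimensional spaces, so $\dim_{\bC}V^\Omega<\infty$ and $\pi$ is admissible.
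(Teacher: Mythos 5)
Your proof is correct, and it genuinely differs from what is in the paper: the paper disposes of Theorem~\ref{t:rodier4} by reducing to abelian $U$ and then simply \emph{citing} Rodier's Theorem~4, whereas you reconstruct a self-contained proof (which is, in spirit, Rodier's own argument) from Theorem~\ref{t:rodier3} and a Mackey-style analysis of $\Omega$-fixed vectors in $\cInd_H^G W$. The reduction to abelian $U$ is carried out exactly as the paper indicates. Your realization of $V^\Omega$ inside $\bigoplus_{\Omega\backslash G/H} W^{\sigma(g^{-1}\Omega g\cap H)}$ is sound — left $\Omega$-invariance and right $H$-equivariance give the constraint $f(g)\in W^{\sigma(g^{-1}\Omega g\cap H)}$, and compact support modulo $H$ forces the image into the direct sum. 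The heart of the matter, and the point where closedness of $G\cdot\chi$ is used, is your finiteness claim for the set of double cosets contributing a nonzero summand, and this is handled correctly: $g^{-1}(\Omega\cap U)g\subseteq g^{-1}\Omega g\cap H$ restricts the relevant $g$ to those with $g\cdot\chi$ in the compact open subgroup $K^\perp\subset U^*$; the intersection $(G\cdot\chi)\cap K^\perp$ is compact because the orbit is closed; and the $\Omega$-orbits on $G\cdot\chi$ (which parametrize the double cosets $\Omega g H$ via the Effros-type homeomorphism $G/H\iso G\cdot\chi$ underlying Theorem~\ref{t:rodier3}) are open, so only finitely many can fit inside a compact set. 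The payoff of your approach is a transparent account of exactly where closedness (as opposed to mere local closedness) enters, which the paper's citation leaves implicit; the payoff of the paper's approach is brevity.
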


This reduces at once to the case where $U$ is abelian, which is \cite[Thm.~4, p.~189]{rodier}.

\section{Proof of Gutkin's conjecture}\label{s:proof}

In this section we complete the proofs of Theorems \ref{t:gutkin-unitary} and \ref{t:gutkin-smooth}. The case where $F$ is either $\bR$ or $\bC$ was covered in \S\ref{ss:local-char-zero}, and the case where $F$ is finite was covered in \S\ref{ss:finite-fields-general}. On the other hand, the statement of Theorem \ref{t:gutkin-smooth} formally makes sense in the case where $F$ is finite, provided we equip all the groups that appear in it with the discrete topology\footnote{In this setting the statements of Theorems \ref{t:gutkin-unitary} and \ref{t:gutkin-smooth} become essentially equivalent, in particular because for a finite discrete group every representation is smooth, and every irreducible representation is finite dimensional (hence admissible) and unitarizable by a standard averaging argument.}. Since the only property of $F$ that plays a role in the proof presented below is the fact that $F$ is a locally compact totally disconnected self-dual topological field, we do not exclude the case where $F$ is finite for clarity.

\subsection{Setup}
We fix a (topological) field $F$ that is either finite and discrete, or local and nonarchimedean of arbitrary characteristic (with its standard topology). We will begin by proving Theorem \ref{t:gutkin-smooth}, and then indicate the (minor) changes and additions needed to adapt our argument to Theorem \ref{t:gutkin-unitary}.

\mbr

Let $A$ be a finite dimensional associative nilpotent algebra over $F$, and let $\pi:1+A\rar{}GL(V)$ be a smooth irreducible representation of $1+A$. If $\dim_{\bC} V=1$, the statement of Theorem \ref{t:gutkin-smooth} becomes vacuous\footnote{The unitarizability of $\pi$ is equivalent to the statement that a smooth homomorphism $1+A\rar{}\bC^\times$ takes values inside the unit circle in $\bC^\times$, which follows from the fact that $1+A$ is an $\ell_c$-group (cf.~Lemma \ref{l:pontryagin}).}, so we may assume that $\dim_{\bC} V>1$. Our first goal is to realize $\pi$ via induction with compact supports from a subgroup of $1+A$ of the form $1+B$, where $B\subset A$ is an $F$-subalgebra of codimension $1$.

\subsection{Key construction}\label{ss:key-construction} Choose the smallest integer $m\geq 2$ such that the restriction of $\pi$ to $1+A^m$ acts on $V$ by scalars, and let $\ze:1+A^m\rar{}\bC^\times$ denote the corresponding smooth homomorphism. Note that $\ze$ must be invariant under the conjugation action of $1+A$. In particular, Corollary \ref{c:commutator-pairing} yields a pairing
\[
\overline{C}_\ze: \bigl( 1+(A/A^2) \bigr) \times \bigl(
1+(A^{m-1}/A^m)\bigr) \rar{} \cst
\]
induced by the map
\[
(1+A)\times(1+A^{m-1})\rar{}\bC^\times, \qquad (g,h)\longmapsto\ze(ghg^{-1}h^{-1}).
\]
Note that $\overline{C}_\ze$ is not identically $1$, since otherwise $\pi(1+A^{m-1})$ would consist of scalar operators, which would contradict the minimality requirement in the choice of $m$.

\mbr

Note that the groups $1+(A/A^2)$ and $1+(A^{m-1}/A^m)$ can be canonically identified with the additive groups of the $F$-vector spaces $A/A^2$ and $A^{m-1}/A^m$. Therefore the Pontryagin dual of $1+(A^{m-1}/A^m)\cong A^{m-1}/A^m$ also has a canonical $F$-vector space structure: given $\la\in F$ and a smooth homomorphism $f:A^{m-1}/A^m\rar{}\cst$, we define $\la\cdot f:A^{m-1}/A^m\rar{}\cst$ by $(\la\cdot f)(x)=f(\la\cdot x)$. Corollary \ref{c:commutator-pairing}(b) means that the pairing $\overline{C}_\ze$ induces an \emph{$F$-linear map}
\[
\Phi_\ze : (A/A^2) \rar{} (A^{m-1}/A^m)^*.
\]
(The right hand side can be thought of either as the Pontryagin dual or as the $F$-vector space dual of $A^{m-1}/A^m$; the two can be identified because $F$ is a self-dual field.)

\mbr

We see that the linear map $\Phi_\ze$ is not identically zero, whence there exists a $1$-dimensional subspace $L\subset A^{m-1}/A^m$ such that the composition
\begin{equation}\label{e:key-composition}
A/A^2 \xrar{\ \ \Phi_\ze\ \ } (A^{m-1}/A^m)^* \rar{} L^*
\end{equation}
is surjective, where the second map is given by restriction.

\mbr

We let $A_1\subset A$ be the preimage of the kernel of the composition \eqref{e:key-composition}, and we let $U\subset A^{m-1}$ be the preimage of $L$. Note that $A_1$ and $U$ are two-sided ideals (hence also $F$-subalgebras) of $A$.

\begin{lem}\label{l:annihilate-commutator}
We have $U\subset A_1$; equivalently, $\ze$ annihilates the commutator $(1+U,1+U)$.
\end{lem}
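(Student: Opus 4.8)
The plan is to reduce the assertion $U\subset A_1$ to a single vanishing of the commutator pairing $\overline{C}_{\ze}$ and then to verify that vanishing by computing it with commuting representatives. First I would record why the two formulations in the statement are equivalent. Unwinding the construction of $A_1$, a class $\bar a\in A/A^2$ lies in the kernel of \eqref{e:key-composition} precisely when $\overline{C}_{\ze}(1+\bar a,1+\ell)=1$ for every $\ell\in L$; and since $U$ is by definition the preimage of $L$ under $A^{m-1}\onto A^{m-1}/A^m$, the image of $U$ in $A^{m-1}/A^m$ is exactly $L$. Hence $U\subset A_1$ is equivalent to requiring $\overline{C}_{\ze}(1+\bar u,1+\bar u')=1$ for all $u,u'\in U$, where $\bar u\in A/A^2$ and $\bar u'\in A^{m-1}/A^m$ are the images of $u$ and $u'$. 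On the other hand, for $u,u'\in U\subset A^{m-1}$ the commutator $(1+u)(1+u')(1+u)^{-1}(1+u')^{-1}$ lies in $1+A^m$ (so that $\ze$ is defined on it), and by construction of $\overline{C}_{\ze}$ its value equals $\ze$ of that commutator; thus the same vanishing says exactly that $\ze$ is trivial on $(1+U,1+U)$. So it remains to prove the identity $\overline{C}_{\ze}(1+\bar u,1+\bar u')=1$.

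For this I would invoke Corollary \ref{c:commutator-pairing}(a), which lets me compute $\overline{C}_{\ze}$ using any convenient lifts of its arguments. Fix $\ell\in A^{m-1}$ whose image generates the one-dimensional space $L$, and let $u,u'\in U$. Their images in $A^{m-1}/A^m$ lie in $L$, so $u\equiv\la\ell$ and $u'\equiv\mu\ell$ modulo $A^m$ for suitable $\la,\mu\in F$; since $m\geq2$ we have $A^m\subset A^2$, so $\la\ell$ also represents $\bar u$ in $A/A^2$. Therefore $\overline{C}_{\ze}(1+\bar u,1+\bar u')=\ze\bigl((1+\la\ell)(1+\mu\ell)(1+\la\ell)^{-1}(1+\mu\ell)^{-1}\bigr)$, and the key point is that $1+\la\ell$ and $1+\mu\ell$ commute in $1+A$, because $(\la\ell)(\mu\ell)=\la\mu\,\ell^2=(\mu\ell)(\la\ell)$. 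Hence the bracketed commutator is $1$ and $\ze(1)=1$.

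I do not expect a real obstacle: the whole content, once the problem is recast through $\overline{C}_{\ze}$, is that all the group elements $1+\la\ell$ with $\la\in F$ pairwise commute — and this is exactly where the one-dimensionality of $L$ is used. The points needing care are only bookkeeping: that the relevant commutators land in $1+A^m$ so that $\ze$ can be applied (this is already implicit in the definition of the pairing, and otherwise follows from $(1+A^i,1+A^j)\subset 1+A^{i+j}$ together with $m\geq2$), and that substituting the representatives $\la\ell$, $\mu\ell$ is legitimate in \emph{both} slots of $\overline{C}_{\ze}$. The argument is uniform in $m$ and in the characteristic of $F$; when $m\geq3$ one simply has $\bar u=0$ in $A/A^2$ and the identity is immediate, which matches the trivial inclusion $U\subset A^{m-1}\subset A^2\subset A_1$ in that range.
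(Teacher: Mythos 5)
Your proof is correct and takes essentially the same approach as the paper's: reduce to a vanishing of $\overline{C}_\ze$ on $L$, pick a generator of the one-dimensional space $L$, and observe that the two group elements $1+\la\ell$ and $1+\mu\ell$ commute because $\la\ell$ and $\mu\ell$ commute in $A$. The only cosmetic difference is that the paper dispatches $m\geq 3$ as vacuous at the outset and runs the commuting-representatives argument only for $m=2$, whereas you run the argument uniformly in $m$ and note the $m\geq 3$ degeneration afterward.
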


\begin{proof}
The assertion is vacuous if $m\geq 3$, so assume that $m=2$. We need to check that the commutator pairing
$\overline{C}_\ze : \bigl( 1+(A/A^2) \bigr) \times \bigl( 1+(A/A^2) \bigr) \rar{} \cst$
is identically $1$ on $1+L$. Using the fact that $\dim_F L=1$, we see that there exists $a\in U$ such that every element of $L$ is the image of $\la\cdot a$ for some $\la\in F$. But if $\la_1,\la_2\in F$, then $\la_1\cdot a$ and $\la_2\cdot a$ commute in $A$, whence $\overline{C}_\ze(1+\la_1 a,1+\la_2 a)=1$. This yields the lemma.
\end{proof}

\subsection{Key auxiliary result}
The next observation is key for applying the theorems of \cite{rodier} in our situation.

\begin{lem}\label{l:key}
Let us keep all the notation of \S\ref{ss:key-construction}.

\begin{enumerate}[$($a$)$]
\item There exists a smooth homomorphism $\chi:1+U\rar{}\cst$ such that $\chi\bigl\lvert_{1+A^m}=\ze$.
\item Any two such homomorphisms are $(1+A)$-conjugate.
\item The stabilizer of any such $\chi$ in $1+A$ is equal to $1+A_1$.
\end{enumerate}
\end{lem}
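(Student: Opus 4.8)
The plan is to analyze the structure of $1+U$ using the fact that $U$ is a two-sided ideal of $A$ sitting between $A^{m-1}$ and $A^m$. Concretely, $U/A^m \cong L$ is one-dimensional over $F$, and since $U \subset A^{m-1}$ we have $U^2 \subset A^{2m-2} \subset A^m$ (as $m \geq 2$), so $1+U$ is a two-step nilpotent group: the subgroup $1+A^m$ is central in $1+U$ (it is certainly normal, and $(1+A^m, 1+U) \subset (1+A, 1+A^{m-1})$... more directly, for $x \in A^m$ and $y \in U$, Lemma \ref{l:auxiliary}-type reasoning gives that the commutator lies in $1 + A^{m+1} \cdot(\dots)$, hence after passing to a suitable reduction it is trivial; alternatively $[A^m, U] \subset [A^m, A^{m-1}] \subset A^{2m-1} \subset A^{m+1} \subset \dots$, and in fact one checks $(1+A^m,1+U) \subset (1+A, 1+A^{2m-2}) \subset \{1\}$ when $2m-2 \geq $ the nilpotence class, but more robustly one uses Lemma \ref{l:annihilate-commutator} together with the fact that $\ze$ is already defined on $1+A^m$). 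The point is that on $1+U$, the quotient $(1+U)/(1+A^m)$ is isomorphic to the additive group $L$, which is one-dimensional over $F$.

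For part (a): since $U \subset A_1$ by Lemma \ref{l:annihilate-commutator}, the homomorphism $\ze$ is invariant under $1+U$-conjugation as well, so $\ze$ is trivial on $(1+U, 1+U)$. Moreover by Lemma \ref{l:annihilate-commutator} combined with the structure above, $1+U$ modulo $1+A^m$ is abelian (indeed $(1+U,1+U) \subset 1+A^m$, and $\ze$ kills it). Thus extending $\ze$ from $1+A^m$ to $1+U$ amounts to extending a character from a central subgroup to a group whose quotient is the additive group $L \cong F$ (or $F^n$), which is a divisible/injective-type situation: the Pontryagin dual is exact on such groups. More precisely, $1+A^m$ is a closed subgroup of the $\ell_c$-group $1+U$, and every smooth character of a closed subgroup of an $\ell_c$-group extends to the whole group because the Pontryagin dual functor is exact on the category of $\ell_c$-groups (Lemma \ref{l:pontryagin} shows $U^*$ behaves well; concretely one uses that $(1+U)/(1+A^m)$ is a countable union of compact open subgroups and Pontryagin duality turns the surjection $\widehat{1+U} \to \widehat{1+A^m}$... actually one wants the restriction map on duals to be surjective, which follows from exactness of the dual on short exact sequences of $\ell_c$-groups, i.e. from Pontryagin duality plus the fact that a discrete abelian group is injective in no category, so instead: $1+A^m$ is open in... no). Let me restate: the cleanest route is that the extension problem for a character of the \emph{central} subgroup $1+A^m$ of the two-step nilpotent $\ell_c$-group $1+U$ is governed by the commutator pairing, and since $\ze$ is $(1+U)$-invariant (i.e. trivial on $(1+U,1+U) \subset 1+A^m$), the obstruction vanishes and an extension $\chi$ exists; openness of $\ker\chi$ is automatic by Lemma \ref{l:pontryagin}.

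For part (b): if $\chi, \chi'$ both restrict to $\ze$ on $1+A^m$, then $\chi'/\chi$ is a smooth character of $1+U$ trivial on $1+A^m$, hence factors through $(1+U)/(1+A^m) \cong L$. Two characters of $1+U$ are $(1+A)$-conjugate iff they differ by a character in the image of the orbit map; here one computes that the conjugation action of $1+A$ on $\mathrm{Hom}(L, \cst)$ (the group of characters extending $\ze$, a torsor under $\widehat{L}$) has, as its "derivative", precisely the pairing $\overline{C}_\ze$ evaluated against $L$, i.e. the composition $A/A^2 \to L^*$ from \eqref{e:key-composition}, which is \emph{surjective} by construction. Surjectivity of this linear (hence, for self-dual $F$, Pontryagin-surjective) map means $1+A$ acts transitively on the set of extensions of $\ze$, giving (b). For part (c): the stabilizer of a given $\chi$ is the set of $g \in 1+A$ with $g\chi g^{-1} = \chi$, i.e. $\chi(g h g^{-1} h^{-1}) = 1$ for all $h \in 1+U$; since $\chi$ restricted to $1+A^m$ is $\ze$, fixed, this condition depends only on the pairing $C_\ze(g, h)$ for $h \in 1+U$, which by Corollary \ref{c:commutator-pairing} factors through $\overline{C}_\ze(\bar g, \cdot)$ on $L$, i.e. through $\Phi_\ze(\bar g)|_L \in L^*$ — and this vanishes precisely when $\bar g$ lies in the kernel of \eqref{e:key-composition}, i.e. when $g \in 1+A_1$. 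Hence the stabilizer is exactly $1+A_1$.

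\textbf{Main obstacle.} The delicate point is part (a), the \emph{existence} of the extension $\chi$: one must argue that the central extension $1 \to 1+A^m \to 1+U \to L \to 1$ together with the $(1+U)$-invariant character $\ze$ of $1+A^m$ admits a compatible character on $1+U$. This is where the two-step-nilpotent structure of $1+U$ (guaranteed by $U \subset A^{m-1}$ so $U^2 \subset A^{2m-2} \subset A^m$ when $m \geq 2$, actually when $2m-2 \geq m$, always) and the vanishing $(1+U,1+U) \subset \ker\ze$ (Lemma \ref{l:annihilate-commutator}) must be combined — either via an explicit section/cocycle computation, or by invoking exactness of Pontryagin duality / injectivity properties of characters of $\ell_c$-groups (Lemma \ref{l:pontryagin}, Lemma \ref{l:jacquet-exact}). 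I expect the write-up to split into the easy case $m \geq 3$ (where $U \subset A^{m-1}$, $U^2 \subset A^{2m-2}$ with $2m-2 \geq m+1$, and also $[1+A^m, 1+U]$ is even more degenerate, so various commutators literally vanish and $1+U$ may even be abelian) and the case $m=2$ (the genuinely two-step case, handled via $\overline{C}_\ze$ and Lemma \ref{l:annihilate-commutator}).
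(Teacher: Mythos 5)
Your parts (b) and (c) are correct and follow essentially the same path as the paper: both reduce the question to the commutator pairing $\overline{C}_\ze$ and $\Phi_\ze$ restricted to $L$, and both use the surjectivity of the composition \eqref{e:key-composition} (for transitivity in (b)) and the definition of $A_1$ as the preimage of its kernel (for (c)).

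Part (a) is where you diverge from the paper, and it is also the least clean part of your write-up. The paper's argument is short and reuses its own machinery: by Lemma \ref{l:annihilate-commutator}, $\pi\bigl\lvert_{1+U}$ factors through the abelianization $(1+U)^{ab}$, hence has a one-dimensional irreducible \emph{sub}quotient (Zorn plus Schur for the abelian $\ell_c$-group $(1+U)^{ab}$); by Proposition \ref{p:dual-injective} one-dimensional smooth representations of the $\ell_c$-group $1+U$ are injective objects of $\cR(1+U)$, so the subquotient is in fact a \emph{quotient}, yielding $\chi$; compatibility $\chi\bigl\lvert_{1+A^m}=\ze$ is then automatic. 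You instead attempt a purely group-theoretic extension of $\ze$ from $1+A^m$ to $1+U$ using Pontryagin duality. That route can be made to work, but you have to say it cleanly: $\ze$ is trivial on $(1+U,1+U)\subset 1+A^m$ (Lemma \ref{l:annihilate-commutator}) and also on $(1+U,1+A^m)$ (since $\ze$ is even $(1+A)$-invariant), so $\ze$ descends to a character of the closed subgroup $(1+A^m)/(1+U,1+U)$ of the \emph{abelian} $\ell_c$-group $(1+U)^{ab}$, which extends to all of $(1+U)^{ab}$ by exactness of the Pontryagin dual on locally compact abelian groups; openness of the kernel comes from Lemma \ref{l:pontryagin}. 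Your text circles this argument but never states it crisply, and along the way it asserts that ``$1+A^m$ is central in $1+U$,'' which is false in general: $(1+A^m,1+U)\subset 1+A^{2m-1}$, which need not be trivial (e.g. $A$ strictly upper-triangular $4\times 4$ matrices, $m=2$). Fortunately centrality is not what you need; $(1+A)$-invariance of $\ze$ is enough. Also, the case split $m\geq 3$ versus $m=2$ you anticipate at the end is unnecessary: both the paper's argument and a cleaned-up version of yours treat the two cases uniformly once Lemma \ref{l:annihilate-commutator} is in hand.

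So: correct in outline, with (a) handled by a genuinely different (Pontryagin-duality) argument than the paper's (representation-theoretic, via Proposition \ref{p:dual-injective}). The paper's route is a bit tighter because it avoids quoting the extension theorem for characters of LCA groups and instead reuses Proposition \ref{p:dual-injective}, which it needs anyway for Corollaries \ref{c:admissible-injective} and \ref{c:irreducible-quotient}; your route is more self-contained at the level of the group $1+U$ but needs the centrality claim removed and the duality step stated precisely.
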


\begin{proof}
(a) By Lemma \ref{l:annihilate-commutator}, the restriction of $\pi$ to $1+U$ has a $1$-dimensional \emph{sub}quotient. By Proposition \ref{p:dual-injective}, every smooth $1$-dimensional representation of $1+U$ is injective as an object of $\cR(1+U)$. Hence $\pi\bigl\lvert_{1+U}$ has a $1$-dimensional quotient; say it is given by a smooth homomorphism $\chi:1+U\rar{}\cst$. Since $\pi$ acts on $1+A^m$ via the scalar $\ze$, we must have $\chi\bigl\lvert_{1+A^m}=\ze$.

\mbr

(b) Suppose $\chi_1,\chi_2:1+U\rar{}\cst$ are smooth homomorphisms with $\chi_1\bigl\lvert_{1+A^m}=\ze=\chi_2\bigl\lvert_{1+A^m}$. Then $\chi_1^{-1}\cdot\chi_2$ factors through $(1+U)/(1+A^m)\cong 1+L$. By our choice of $L$, there exists $x\in A$ with $\Phi_\ze(\overline{x})\bigl\lvert_L=\chi_1^{-1}\cdot\chi_2$, where $\overline{x}$ is the image of $x$ in $A/A^2$. This means that
\[
\chi_1(1+y)^{-1}\cdot\chi_2(1+y) = \ze\bigl( (1+x)(1+y)(1+x)^{-1}(1+y)^{-1} \bigr)
\]
for all $y\in U$. Multiplying both sides by $\chi_1(1+y)$ and recalling that $\ze=\chi_1\bigl\lvert_{1+A^m}$, we find that
\[
\chi_2(1+y) = \chi_1 \bigl( (1+x)(1+y)(1+x)^{-1} \bigr)
\]
for all $y\in U$, proving (b).

\mbr

(c) A calculation similar to the one we just used shows that if $\chi:1+U\rar{}\cst$ is a smooth homomorphism satisfying $\chi\bigl\lvert_{1+A^m}=\ze$, then an element $1+x\in 1+A$ stabilizes $\chi$ if and only if $\ze\bigl((1+x)(1+y)(1+x)^{-1}(1+y)^{-1}\bigr)=1$ for all $y\in U$. This is equivalent to $x\in A_1$ by the construction of $A_1$.
\end{proof}

\subsection{Proof of Theorem \ref{t:gutkin-smooth}}\label{ss:proof-smooth}
We remain in the setup of \S\ref{ss:key-construction}. By induction on $\dim_F A$, we may assume that all the assertions of Theorem \ref{t:gutkin-smooth} hold for any smooth irreducible representation of $1+A_1$. In particular, by Corollary \ref{c:irreducible-quotient}, the restriction $\pi\bigl\lvert_{1+A_1}$ has an irreducible quotient, say $\rho$. By construction, $\rho(1+U)$ commutes with $\rho(1+A_1)$, and hence, by Schur's lemma, the restriction of $\rho$ to $1+U$ is scalar, say, given by a smooth homomorphism $\chi:1+U\rar{}\cst$.

\mbr

Write $G=1+A$ and note that by Lemma \ref{l:key}(b), the orbit $G\cdot\chi$ is closed in the Pontryagin dual $(1+U)^*$, while by Lemma \ref{l:key}(c), the stabilizer $Z_G(\chi)$ equals $1+A_1$.

\mbr

Since $\rho$ is irreducible, it is admissible by the induction hypothesis. Hence so is the smooth dual $\rho^\vee$ (cf.~Lemma \ref{l:admissible-duality}). Note that $1+U$ acts on $\rho^\vee$ via the scalar $\chi^{-1}$ and $Z_G(\chi^{-1})=Z_G(\chi)=1+A_1$. Hence by Theorems \ref{t:rodier3} and \ref{t:rodier4}, the representation $\cInd_{1+A_1}^{1+A}(\rho^\vee)$ of $1+A$ is irreducible and admissible. By Lemma \ref{l:admissible-duality} and Remark \ref{r:induction-duality}, we have
\[
\Ind_{1+A_1}^{1+A}\rho \cong \Ind_{1+A_1}^{1+A}\bigl((\rho^\vee)^\vee\bigr) \cong \left( \cInd_{1+A_1}^{1+A} (\rho^\vee) \right)^\vee,
\]
and by Lemma \ref{l:admissible-duality}, the latter representation is admissible and irreducible. Hence $\Ind_{1+A_1}^{1+A}\rho$ is admissible and irreducible; in particular, the natural map $\cInd_{1+A_1}^{1+A}\rho\rar{}\Ind_{1+A_1}^{1+A}\rho$ must be an isomorphism. On the other hand, the natural map $\pi\rar{}\Ind_{1+A_1}^{1+A}\rho$ coming from Frobenius reciprocity must also be an isomorphism; in particular, $\pi$ is admissible.

\mbr

Next, by the induction hypothesis, the representation $\rho$ admits a $(1+A_1)$-invariant positive definite Hermitian inner product. Using a translation-invariant measure on $(1+A)/(1+A_1)$ and imitating the construction recalled in \S\ref{sss:unitary-induction} above, we can equip $\pi\cong\cInd_{1+A_1}^{1+A}\rho$ with a $(1+A)$-invariant positive definite Hermitian inner product.

\mbr

Finally, using an obvious transitivity property of the functors $\Ind$ and $\cInd$ and the induction hypothesis, we conclude that all the assertions of Theorem \ref{t:gutkin-smooth} hold for the representation $\pi$ of $1+A$. This completes the proof of Theorem \ref{t:gutkin-smooth}.

\subsection{Proof of Theorem \ref{t:gutkin-unitary}}\label{ss:proof-unitary}
Let us end by explaining how the argument presented above needs to be modified in order to yield the assertion of Theorem \ref{t:gutkin-unitary}. We begin by following the constructions of \S\ref{ss:key-construction} almost verbatim (replacing $V$ with $\cH$); everything there remains valid in the case where $\pi:1+A\rar{}U(\cH)$ is a unitary irreducible representation. In order to complete the induction step we must show that
there exists a unitary representation $\rho$ of $1+A_1$ such that $\pi\cong\uInd_{1+A_1}^{1+A}\rho$.

\mbr

To this end, we will replace references to results of \cite{rodier} with the more classical ``Mackey machine;'' we refer the reader to \cite[Chapter 6]{folland} for a more modern exposition.

\mbr

The restriction of $\pi$ to $1+U$ is a unitary (not necessarily irreducible) representation of $1+U$, which factors through the abelianization $(1+U)^{ab}$ by Lemma \ref{l:annihilate-commutator}. Therefore $\pi$ can be decomposed as a direct integral of $1$-dimensional unitary representations of $1+U$, and this decomposition is determined by an $\cH$-projection-valued measure $P$ on $(1+U)^*$. It is easy to see that in the language of \cite[\S6.4]{folland}, the triple $(\pi,(1+U)^*,P)$ is a ``system of imprimitivity'' on $1+A$. Moreover, since $\pi$ acts on $1+A^m$ via the character $\ze$, we see that $P$ must be supported on the subset $S\subset (1+U)^*$ consisting of all $\chi$ such that $\chi\bigl\lvert_{1+A^m}=\ze$. By Lemma \ref{l:key}(b), $S$ is a single $(1+A)$-orbit. Thus $(\pi,S,P)$ is a ``transitive system of imprimitivity'' (\emph{op.~cit.}, \S6.5).

\mbr

Finally, Theorem 6.31 in \emph{op.~cit.} implies that if $\chi\in S$, then $\pi$ can be obtained by unitary induction from a unitary representation of $Z_{1+A}(\chi)=1+A_1$ (cf.~Lemma \ref{l:key}(c)).

\begin{rem}\label{r:generality}
The proof we presented here only relies on the fact that $F$ is self-dual and not on the fact that it is totally disconnected. Thus we do obtain a uniform proof of Theorem \ref{t:gutkin-unitary} that is valid for all self-dual locally compact topological fields at once, as promised.
\end{rem}

\subsection{Final remarks}
Under the hypotheses of Theorem \ref{t:rodier4}, Rodier proved in
\cite[Thm.~5, p.~190]{rodier} that if $\rho$ is a \emph{unitary}
irreducible representation of $Z_G(\chi)$ on which $U$ acts via $\chi$, then \[
\left[ \uInd_{Z_G(\chi)}^G \rho \right]^{sm} =
\cInd_{Z_G(\chi)}^G(\rho^{sm}).
\]
(As before, the reduction to the case where $U$ is abelian, which is treated in \emph{loc.~cit.}, is immediate.)

\mbr

Using this result and analyzing the arguments of \S\S\ref{ss:proof-smooth}--\ref{ss:proof-unitary} above, we see that they imply

\begin{prop}
Let $F$ be a local nonarchimedean field, let $A$ be a finite dimensional associative nilpotent algebra over $F$, and let $\pi:1+A\rar{}U(\cH)$ be a unitary irreducible representation. Then the smooth part $\pi^{sm}$ is irreducible as a representation of $1+A$ $($in the algebraic sense$)$, and $\pi$ $($equivalently, $\pi^{sm}${}$)$ is admissible.
\end{prop}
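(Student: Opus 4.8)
The plan is to argue by induction on $\dim_F A$, reprising the proofs of Theorems \ref{t:gutkin-unitary} and \ref{t:gutkin-smooth} from \S\S\ref{ss:proof-smooth}--\ref{ss:proof-unitary} and splicing the theorem of Rodier recalled above into the inductive step. If $\dim_{\bC}\cH=1$, then $\pi$ is a unitary character of the $\ell_c$-group $1+A$, hence smooth by Lemma \ref{l:pontryagin}, so $\pi^{sm}=\pi$ is $1$-dimensional and there is nothing to prove. Assume $\dim_{\bC}\cH>1$, and run the constructions of \S\ref{ss:key-construction} and \S\ref{ss:proof-unitary} for $\pi$ verbatim: choosing the minimal $m\geq 2$ for which $\pi\restr{1+A^m}$ is scalar, we obtain the smooth homomorphism $\ze$, the codimension-$1$ two-sided ideal $A_1\subset A$, the two-sided ideal $U\subset A^{m-1}$, and a smooth homomorphism $\chi:1+U\rar{}\cst$ with $\chi\restr{1+A^m}=\ze$ such that, by Lemma \ref{l:key}, the orbit $(1+A)\cdot\chi$ is closed in $(1+U)^*$ and $Z_{1+A}(\chi)=1+A_1$. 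Exactly as in \S\ref{ss:proof-unitary}, the Mackey machine (\cite[Ch.~6]{folland}) then furnishes a \emph{unitary irreducible} representation $\rho$ of $1+A_1$ on which $1+U$ acts through the scalar $\chi$, together with an isomorphism $\pi\cong\uInd_{1+A_1}^{1+A}\rho$.

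I would then apply the inductive hypothesis to $\rho$ (legitimate since $\dim_F A_1=\dim_F A-1$): the smooth part $\rho^{sm}$ is algebraically irreducible and admissible as a representation of $1+A_1$; in particular $\rho^{sm}\neq 0$. Since $1+U$ acts on $\rho$ through the smooth scalar $\chi$ and $(1+A)\cdot\chi$ is closed in $(1+U)^*$, the result of Rodier quoted above (\cite[Thm.~5, p.~190]{rodier}, used with $G=1+A$ and the normal subgroup $1+U$, which one replaces by $(1+U)^{ab}$ exactly as in \S\ref{ss:rodier}, and $Z_G(\chi)=1+A_1$) gives
\[
\pi^{sm}=\bigl[\uInd_{1+A_1}^{1+A}\rho\bigr]^{sm}\cong\cInd_{1+A_1}^{1+A}\bigl(\rho^{sm}\bigr).
\]
But $\rho^{sm}$ is an irreducible admissible smooth representation of $1+A_1=Z_{1+A}(\chi)$ on which $1+U$ acts via the scalar $\chi$, so Theorems \ref{t:rodier3} and \ref{t:rodier4} — applied just as in the proof of Theorem \ref{t:gutkin-smooth} in \S\ref{ss:proof-smooth} — show that $\cInd_{1+A_1}^{1+A}(\rho^{sm})$ is irreducible and admissible in $\cR(1+A)$. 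Hence $\pi^{sm}$ is irreducible and admissible, and $\pi$ itself is admissible because $\cH^K=(\cH^{sm})^K$ for every compact open $K\subset 1+A$ (a $K$-fixed vector has open stabilizer, hence is smooth). This closes the induction.

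The genuinely new ingredient is the appeal to \cite[Thm.~5]{rodier}, which converts the unitary induction produced in \S\ref{ss:proof-unitary} into the compact induction analyzed in \S\ref{ss:proof-smooth}; everything else is a line-by-line reprise of those subsections. The step I would watch most carefully is the one already used implicitly in \S\ref{ss:proof-unitary}: that the transitive system of imprimitivity attached to $\pi$ over the closed orbit $(1+A)\cdot\chi$ yields a representation $\rho$ of the stabilizer $1+A_1$ that is \emph{irreducible} and on which $1+U$ acts by the honest scalar $\chi$ (not merely with spectral support on the orbit). This is a standard feature of the Mackey correspondence, but it is essential here, since it is exactly what makes the inductive hypothesis applicable to $\rho$ and lets the argument get off the ground.
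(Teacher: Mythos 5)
Your proposal is correct and reconstructs exactly what the paper has in mind: the paper only states that the Proposition follows "using Rodier's Theorem~5 and analyzing the arguments of \S\ref{ss:proof-smooth}--\ref{ss:proof-unitary}," and your induction on $\dim_F A$, splicing Rodier's identity $\bigl[\uInd_{1+A_1}^{1+A}\rho\bigr]^{sm}\cong\cInd_{1+A_1}^{1+A}(\rho^{sm})$ between the Mackey-machine step of \S\ref{ss:proof-unitary} and the Rodier equivalence/admissibility step of \S\ref{ss:proof-smooth}, is precisely that analysis carried out in full. The point you flag as most delicate (that the imprimitivity theorem returns an \emph{irreducible} $\rho$ on which $1+U$ acts by the honest scalar $\chi$, so the inductive hypothesis applies) is indeed the load-bearing feature of the Mackey correspondence here, and your use of it is sound.
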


It is not known to us whether the same result holds for more general $\ell$-groups of the form $\bG(F)$, where $F=\bF_q((t))$ and $\bG$ is a unipotent algebraic group over $F$.

\mbr

If $F$ is an infinite countable field (such as $\bQ$), we can equip $F$ with the discrete topology, so that it becomes a locally compact second countable topological field. Even though such an $F$ is not self-dual, the statements of Theorems \ref{t:gutkin-unitary} and \ref{t:gutkin-smooth} make sense in this setting as well. Justin Conrad asked whether their conclusions still hold. The answer is unknown to us. (Note, however, that they do hold when $A$ is commutative, since Schur's lemma can be applied in this case.)

\end{document}